\numberwithin{equation}{section}
\numberwithin{figure}{section}
\numberwithin{table}{section}
\theoremstyle{plain}
\newtheorem{theorem}{Theorem}[section]
\newtheorem{lemma}{Lemma}[section]
\newtheorem{proposition}{Proposition}[section]
\theoremstyle{definition}
\newtheorem{definition}{Definition}[section]
\newtheorem{remark}{Remark}[section]
\begin{document}
\label{page:t}
\thispagestyle{plain}
\title{
{CONVERGENCE OF APPROXIMATE SOLUTIONS CONSTRUCTED BY THE  FINITE VOLUME METHOD FOR THE MOISTURE TRANSPORT MODEL IN POROUS MEDIA}
}
\author{Akiko Morimura\footnotemark[1]}
\affiliation{Division of Mathematical and Physical Science, Graduate School of Science,\\Japan Women's University\\
2-8-1 Mejirodai, Bunkyoku, Tokyo, 112-8681, Japan}
\email{m1716096ma@ug.jwu.ac.jp}
\sauthor{Toyohiko Aiki} 
\saffiliation{Department of Mathematics, Physics and Computer Science, Faculty of Science,\\Japan Women's University\\
2-8-1 Mejirodai, Bunkyoku, Tokyo, 112-8681, Japan}
\semail{aikit@fc.jwu.ac.jp}
\footcomment{
This work is partially supported by Ebara Corporation.\\
AMS Subject Classification: 76S05, 35K55, 65M08, 35K59.\\
Keywords: moisture transport, porous media, pseudo monotone, finite volume method.
}
\footnotetext[1]{Corresponding author}
\maketitle
\vspace{-0.5cm}
\noindent
{\bf Abstract.}
We consider the initial-boundary value problem for a nonlinear parabolic equation in the one-dimensional interval.
This problem is motivated by a mathematical model for moisture transport in porous media.
We establish the uniqueness of weak solutions to the problem by using the dual equation method.
Moreover, we prove the convergence of approximate solutions constructed with the finite volume method.
\newpage
\section{Introduction}
This paper is concerned with the following initial-boundary value problem for a nonlinear parabolic equation of the form
\begin{align}
\label{P_eq}& \partial_t h(v) = \partial_x \left( \partial_x v + b(v) p \right)
&&\text{in $Q(T)$}, \\
\label{P_BC}& \partial_x v + b(v) p  = 0
&&\text{at $x = 0, 1$ and for $0 < t < T$},\\
\label{P_IC}& v(0, x) = v_0(x)
&&\text{for $x \in (0, 1)$}, 
\end{align}
where $Q(T) \coloneqq (0, T) \times (0, 1)$ with $T > 0$, $v \colon [0, T) \times (0, 1) \to \mathbb{R}$ is an unkown function, $v_{0} \colon (0, 1) \to \mathbb{R}$ is an initial datum, and $p \colon (0, T) \times [0, 1] \to \mathbb{R}$ is a given function.
In addition, given functions $h, b \colon \mathbb{R} \to \mathbb{R}$ satisfy the following assumptions:
\begin{itemize}
\item[(A1)]
$h \in C^2(\mathbb{R})$ with $\delta_{h} \leq h' \leq C_{h}$ and $|h''| \leq C_{h}$ on $\mathbb{R}$; 
\item[(A2)]
$b \in C^2(\mathbb{R})$ with $\delta_{b} \leq b \leq C_{b}$ and $|b'|,  |b''| \leq  C_{b}$ on $\mathbb{R}$,
\end{itemize}
where $\delta_{h}$,  $\delta_{b}$,  $C_{h}$, and $C_{b}$ are positive constants.
Throughout this paper, (P)($p, v_0$) denotes the initial-boundary value problem \eqref{P_eq}--\eqref{P_IC}.

This problem is strongly motivated by the mathematical model for moisture transport in porous media proposed by Green--Dabiri--Weinaug--Prill \cite{G-D-W-P} (GDWP model). 
In Fukui--Iba--Hokoi--Ogura \cite{F-I-H-O}, the validity of the GDWP model was confirmed by comparing experimental results with numerical simulations.
To provide a basis for determining the analytical validity of the GDWP model, in our previous work \cite{M-A}, we introduced the following initial-boundary value problem:
\begin{align}
\label{PP_eq}
&\partial_t \psi(u) = \partial_x \left( \lambda(u) \partial_x ( u +  P)  \right) 
&&\text{in $Q(T)$}, \\
\label{PP_BC}
&\partial_x   ( u + P) = 0 
&&\text{at $x = 0, 1$ and for $0 < t < T$},\\
\label{PP_IC}
&u(0, x) = u_0(x)
&&\text{for $x \in (0, 1)$}, 
\end{align}
which is a simplified model of the GDWP model. 
Here, $u \colon [0, T) \times (0, 1) \to \mathbb{R}$ is an unknown function, $u_{0} \colon (0, 1) \to \mathbb{R}$ is an initial datum, and $\psi, \lambda \colon \mathbb{R} \to \mathbb{R}$ are given functions.
We note that $u$, $\psi(u)$, and $\lambda(u)$ describe the water chemical potential, the water mass, and the diffusion coefficient, respectively.
In addition, we have set $P (t, x) \coloneqq P_a (t, x)/\rho_w$, where $P_a \colon (0, T) \times [0, 1] \to \mathbb{R}$ is the air pressure and $\rho_w$ is the water density.
Formally, by setting $v \coloneqq \widehat{\lambda} \circ u$, $v_0 \coloneqq \widehat{\lambda} \circ u_0$, $p \coloneqq \partial_x P$, $h \coloneqq \psi \circ \widehat{\lambda}^{-1}$, and $b \coloneqq \lambda \circ \widehat{\lambda}^{-1}$, the initial-boundary value problem \eqref{PP_eq}--\eqref{PP_IC} turns into (P)($p, v_0$).
Here, $\widehat{\lambda}$ denotes the primitive of $\lambda$. 

In \cite{F-I-H-O, G-D-W-P}, the function $\psi$ was assumed to be continuous and bounded.
Under this assumption, the equation \eqref{PP_eq} is classified as being of elliptic-parabolic type.
The standard form of the initial-boundary value problems for elliptic-parabolic equations is given by
\begin{align}
\label{EPP_eq}
&\partial_t \beta(w) = \mathrm{div} \; \alpha (t, x, w, \nabla w)
&&\text{in $(0, T) \times \Omega$},\\
\label{EPP_BC}
&\alpha(t, x, w, \nabla w) = 0
&&\text{on $(0, T) \times \partial \Omega$},\\
\label{EPP_IC}
&w(0) = w_0 
&&\text{on $\Omega$},
\end{align}
where $d \in \mathbb{N}$; $\Omega \subset \mathbb{R}^d$ is a domain with the sufficiently regular boundary $\partial \Omega$; $w \colon [0, T) \times \Omega \to \mathbb{R}$ is an unknown funciton; $\beta \colon \mathbb{R} \to \mathbb{R}$ and $\alpha \colon (0, T) \times \overline{\Omega} \times \mathbb{R} \times \mathbb{R}^d \to \mathbb{R}$ are given functions. 
For this problem \eqref{EPP_eq}--\eqref{EPP_IC}, Alt--Luckhaus \cite{A-L} established the existence and uniqueness of weak solutions, provided that $\beta$ is monotone and that $\alpha$ is of the form $\alpha = \alpha(\beta(w), \nabla w)$ and in particular independent of $(t, x) \in (0, T) \times \overline{\Omega}$.
Moreover, Kenmochi--Pawlow \cite{K-P-1, K-P-2} investigated an obstacle problem for \eqref{EPP_eq}--\eqref{EPP_IC}, under the assumption $\alpha = \alpha(\nabla w)$.
On the other hand, in our problem (P)($p, v_0$) or the problem \eqref{PP_eq}--\eqref{PP_IC}, it is necessary to consider the case of $\alpha = \alpha(t, x, w, \nabla w)$ or $\alpha = \alpha(t, x, \beta(w), \nabla w)$, due to the appearance of the function $p$ depending on $(t, x) \in (0, T) \times [0, 1]$.
However, as mentioned below, the presence of $p$ makes significant difficulties in the analysis of our model. 
To overcome this problem, we impose the strong assumptions (A1) and (A2) on $h$ and $b$, respectively.

To clarify the novelty of this paper, we review several related results.
Equations of the form \eqref{PP_eq} are known as Richards-type equations and have been extensively studied (cf. Stokke--Mitra--Storvik--Both--Radu \cite{S-M-S-B-R} and Mitra--Vohral\'ik \cite{M-V}).
In most of these works, the function $P$ in \eqref{PP_eq} is assumed to be independent of the time variable.
In contrast, our target equation \eqref{P_eq}, which is connected with the equation \eqref{PP_eq} via a certain transformation, has the time-dependent function $p$.
Hence, the known results mentioned above cannot be applied directly.

The problem (P)($p, v_0$) can be rewritten as the form \eqref{EPP_eq}--\eqref{EPP_IC} by setting $\beta(r) \coloneqq r$ and $\alpha(t, x, r, s) \coloneqq s/h'(h^{-1}(r)) + b(h^{-1}(r))p(t, x)$.
Here, we have used the fact that $h$ has the inverse by virtue of the assumption (A1).
For this type of equation, Roub\'{i}\v{c}ek \cite{Roubicek} established the existence of weak solutions from the perspective of pseudo-monotone operators.
Therefore, the existence of weak solutions to (P)($p, v_0$) follows from \cite[Proposition 8.37]{Roubicek}.
As for the uniqueness of weak solutions to \eqref{EPP_eq}--\eqref{EPP_IC}, we note that Otto \cite{Otto} considered the case where $\alpha$ is independent of $(t, x) \in (0, T) \times \overline{\Omega}$ and that Feo \cite{Feo} studied the case where $\alpha$ is locally Lipschitz continuous with respect to $r$ under the Dirichlet boundary condition.
Chiyo--Terasaki--Tsuzuki--Yokota \cite{C-T-T-Y} proved the uniqueness of weak solutions to the problem \eqref{PP_eq}--\eqref{PP_IC} in the case where $\lambda/\psi'$ is a positive constant.
In this paper, we establish the uniqueness of weak solutions to (P)($p, v_0$) even in the case corresponding to $\lambda/\psi'$ not being constant (see Theorem \ref{main_thm} in the next section).

Although this paper focuses on weak solutions of (P)($p, v_0$), our goal is to construct strong solutions. 
From the viewpoint of physics, the existence of strong solutions to (P)($p, v_0$) is crucial for ensuring that the equation \eqref{P_eq} and the boundary condition \eqref{P_BC} hold pointwise.
To the best of our knowledge, there is no result on the existence of the strong solutions.
For nonlinear problems, the finite element method (FEM) has been widely used.
However, for the problem (P)($p, v_0$), it seems impossible to construct strong solutions by using the FEM, due to the lack of the regularity of approximate solutions.
On the other hand, there is room to apply the finite volume method to overcome this problem.

The finite volume method (FVM) is one of the well-known numerical discretization techniques for partial differential equations, as well as the finite element method.
One advantage of the FVM over the FEM is that test functions can be taken more flexibly to derive discrete energy estimates.
Thanks to this advantage, approximate solutions may have a sufficient regularity to ensure the existence of strong solutions to (P)($p, v_0$).
As a basis for this attempt, we show that approximate solutions constructed by using the FVM converge to the weak solution to (P)($p, v_0$) (see Theorem \ref{AS_conv} in the next section).
For related results in this direction, we refer to Canc\`{e}s--Guichard \cite{C-G} and Oulhaj--Canc\`{e}s--Chainais-Hillairet \cite{O-C-C}.
The authors considered the convergence of approximate solutions, constructed with the FVM, for the problem \eqref{PP_eq}--\eqref{PP_IC}.
However, since they assumed that $P$ in \eqref{PP_eq} is independent of the time variable, their results do not cover our problem.

We emphasize that concerning the problem (P)($p, v_0$), the FVM is effective to prove not only the convergence of approximate solutions but also the existence of weak solutions.
As far as we know, the known approach to establishing the existence is based on the pseudo-monotone theory or the Galerkin method.
In the proof of our convergence result, the existence of weak solutions to (P)($p, v_0$) is proven in the framework of the FVM.

The rest of this paper is organized as follows.
In the next section, we state our main results: Theorems \ref{main_thm} and \ref{AS_conv}.
In Sections \ref{P_proof_u} and \ref{AS_proof}, we give the proofs of Theorems \ref{main_thm} and \ref{AS_conv}, respectively.

\section{Notation and results} \label{NAR}
In this paper, we put
\begin{align*}
H \coloneqq L^{2}(0, 1), \qquad X \coloneqq H^{1}(0, 1)
\end{align*}
with the standard norms denoted by $|\cdot|_{H} \coloneqq |\cdot|_{L^{2}(0, 1)}$ and $|\cdot|_{X} \coloneqq |\cdot|_{H^{1}(0, 1)}$, respectively.

We first define a weak solution of (P)($p, v_0$).

\begin{definition} \label{P_sol_def}
Let $T>0$.
A function $v \colon [0, T) \times (0, 1) \to \mathbb{R}$ is said to be a weak solution of (P)($p, v_0$) if it satisfies
\begin{align}
\label{P_D1}
v \in  L^{\infty}(0, T; H) \cap L^2(0, T; X)
\end{align}
with the identity
\begin{align}
\label{P_D2}
&-\int_{Q(T)}  h(v (t, x)) \partial_t \eta (t, x) dxdt + \int_{Q(T)} (\partial_x v (t, x) + b(v (t, x)) p (t, x)) \partial_x \eta (t, x) dxdt \nonumber \\
&\hspace{20mm} = \int_0^1 h(v_0(x)) \eta(0, x) dx 
\end{align}
for any $\eta \in W^{1, 2}(0, T; H) \cap L^2(0, T; X)$ with $\eta(T, \cdot) = 0$.
\end{definition}

The existence of weak solutions to (P)($p, v_0$) was established by \cite{Roubicek}.

\begin{proposition}[{\cite[Proposition 8.37]{Roubicek}}] \label{P_e}
Let $T>0$.
Assume (A1) and (A2).
If $p \in L^2(0,T; H)$ and $v_0 \in H$, then {\rm(P)($p, v_0$)} has a weak solution.
\end{proposition}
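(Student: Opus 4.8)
\emph{Strategy.} The plan is to treat (P)($p,v_0$) as an abstract elliptic--parabolic Cauchy problem. Putting $w:=h(v)$ and $\gamma:=h^{-1}$, which by (A1) is Lipschitz and nondecreasing, the system \eqref{P_eq}--\eqref{P_IC} becomes $\partial_t w=\partial_x(\partial_x\gamma(w)+b(\gamma(w))p)$ with a no-flux boundary condition and $w(0,\cdot)=h(v_0)\in H$; this is of the form \eqref{EPP_eq}--\eqref{EPP_IC} with $\beta(r)=r$, the principal part is the monotone coercive Neumann Laplacian, and by (A2) the $p$-term is a bounded lower-order perturbation, so the statement is indeed covered by the pseudo-monotone evolution theory of \cite{Roubicek}. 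To be self-contained I would instead build the weak solution by the Rothe method (implicit Euler in time), which also serves as a warm-up for the FVM construction used later in the paper.

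\emph{Time discretization.} First I would fix $N\in\mathbb N$, set $\tau:=T/N$ and $p^n:=\tau^{-1}\int_{(n-1)\tau}^{n\tau}p(s)\,ds\in H$, put $v^0:=v_0$, and solve recursively, for $n=1,\dots,N$,
\begin{equation*}
h(v^n)-\tau\,\partial_x\bigl(\partial_x v^n+b(v^n)p^n\bigr)=h(v^{n-1})\qquad\text{in }X^{*}.
\end{equation*}
Each step is a well-posed nonlinear elliptic problem: the operator on the left is bounded and, since $h(v)v\ge\delta_h v^2+h(0)v$, coercive on $X$; it is pseudo-monotone, being the sum of a monotone continuous principal part (because $h$ is nondecreasing and $-\partial_x^2$ is monotone) and the strongly continuous lower-order term $v\mapsto-\tau\partial_x(b(v)p^n)$ (if $v_k\rightharpoonup v$ in $X$ then $v_k\to v$ in $C[0,1]$, so $b(v_k)p^n\to b(v)p^n$ in $H$). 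By Brezis's surjectivity theorem such an operator attains $h(v^{n-1})\in X^{*}$, so $v^n\in X$ exists.

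\emph{A priori estimates.} Testing the $n$-th equation with $v^n$ and using the elementary inequality $\bigl(h(v^n)-h(v^{n-1})\bigr)v^n\ge B(v^n)-B(v^{n-1})$, where $B(r):=\int_0^r sh'(s)\,ds$ satisfies $\tfrac{\delta_h}{2}r^2\le B(r)\le\tfrac{C_h}{2}r^2$ by (A1), together with $\bigl|\tau\int_0^1 b(v^n)p^n\partial_x v^n\,dx\bigr|\le\tfrac\tau2|\partial_x v^n|_H^2+\tfrac{C_b^2}{2}\tau|p^n|_H^2$ from (A2), and summing over $n$, I obtain
\begin{equation*}
\max_{1\le n\le N}|v^n|_H^2+\sum_{n=1}^N\tau\,|\partial_x v^n|_H^2\le C\bigl(|v_0|_H^2+\|p\|_{L^2(0,T;H)}^2\bigr)
\end{equation*}
with $C$ independent of $N$. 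Hence the piecewise-constant-in-time interpolant $\overline v_\tau$ of $(v^n)$ is bounded in $L^\infty(0,T;H)\cap L^2(0,T;X)$, so $h(\overline v_\tau)$ is bounded in $L^2(0,T;X)$ by (A1); moreover, reading off the time increments from the discrete equation together with (A1)--(A2) and the above estimates gives $\|h(\overline v_\tau)(\cdot+\tau)-h(\overline v_\tau)\|_{L^2(0,T-\tau;X^{*})}\to 0$.

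\emph{Compactness and passage to the limit.} A discrete Aubin--Lions argument then yields a subsequence with $h(\overline v_\tau)\to\chi$ strongly in $L^2(0,T;H)$; since $h^{-1}$ is Lipschitz, $\overline v_\tau\to v:=h^{-1}(\chi)$ strongly in $L^2(0,T;H)$ and a.e.\ in $Q(T)$, while $\overline v_\tau\rightharpoonup v$ in $L^2(0,T;X)$, so $v$ satisfies \eqref{P_D1}. Therefore $h(\overline v_\tau)\to h(v)$ in $L^2(0,T;H)$, and by (A2) with dominated convergence $b(\overline v_\tau)p\to b(v)p$ in $L^2(0,T;H)$; passing to the limit in the discrete weak formulation, the telescoped time term producing $-\int_{Q(T)}h(v)\partial_t\eta\,dx\,dt+\int_0^1 h(v_0)\eta(0,x)\,dx$ for test functions $\eta$ as in Definition \ref{P_sol_def} (first for smooth $\eta$, then by density), I recover \eqref{P_D2}, so $v$ is a weak solution. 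The main obstacle is precisely this last compactness: one must upgrade the weak $L^2(0,T;X)$-bound to strong $L^2(0,T;H)$-convergence of $h(\overline v_\tau)$ using the $X^{*}$-bound on its time increments, this being needed to identify both the nonlinear term $h(v)$ under $\partial_t$ and the product $b(v)p$. In the present setting this is comparatively mild, since $h$ is bi-Lipschitz and strictly increasing, so the equation is non-degenerate and the only effect of the time- and space-dependent datum $p$ is a lower-order term controlled by $\|p\|_{L^2(0,T;H)}$ via the $L^\infty$-bound on $b$ in (A2).
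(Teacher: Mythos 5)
Your argument is essentially correct, but it is not the route the paper takes: Proposition \ref{P_e} is not proved in the paper at all — it is quoted directly from the pseudo-monotone evolution theory of \cite[Proposition 8.37]{Roubicek}, which is exactly the reduction you sketch in your opening paragraph (take $\beta(r)=r$, monotone coercive principal part, and the $p$-term as a strongly continuous lower-order perturbation). What you then do differently is to give a self-contained construction by Rothe's method: implicit Euler in time, per-step solvability by the surjectivity theorem for bounded, coercive, pseudo-monotone operators, the telescoping energy estimate via $B(r)=\int_0^r s\,h'(s)\,ds$, an $X^{*}$-bound on the discrete time increments, and a discrete Aubin--Lions compactness step; thanks to the non-degeneracy $\delta_h\le h'\le C_h$ the limits $h(v)$ and $b(v)p$ are identified without any Minty-type argument, and the details you leave implicit (translates by arbitrary $s$ rather than only by $\tau$ in the compactness lemma, replacing $p$ by its time averages $p^n$ in the limit passage, density of smooth test functions in the class of Definition \ref{P_sol_def}) are routine. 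The paper's own constructive alternative is different again: it is obtained as a by-product of Theorem \ref{AS_conv}, i.e.\ by discretizing in space with the finite volume method (keeping time continuous), mollifying $p$, and invoking the space-discrete Aubin--Lions lemma of Gallou\"et--Latch\'e (Lemma \ref{discrete_Aubin}). In terms of trade-offs: the citation to \cite{Roubicek} is the shortest path; your time-discretization is a standard, elementary, self-contained proof; the FVM route chosen in the paper costs more work but simultaneously delivers the convergence of the numerical scheme and the kind of discrete energy estimates the authors intend to exploit toward strong solutions.
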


The main result on the uniqueness is stated as follows.

\begin{theorem}\label{main_thm}
Let $T > 0$.
Assume (A1) and (A2).
If $p \in L^{4}(0, T; H)$ and $v_0 \in H$, then {\rm(P)($p, v_0$)} has at most one weak solution.
\end{theorem}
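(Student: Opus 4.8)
The plan is to use the dual (adjoint) equation method, as announced in the abstract. Suppose $v_1$ and $v_2$ are two weak solutions of (P)($p,v_0$) with the same data. Set $w \coloneqq v_1 - v_2$. Subtracting the two weak formulations \eqref{P_D2}, one obtains for every admissible test function $\eta$
\begin{align*}
-\int_{Q(T)} (h(v_1) - h(v_2))\,\partial_t \eta \, dxdt + \int_{Q(T)} (\partial_x w)\,\partial_x \eta \, dxdt + \int_{Q(T)} (b(v_1) - b(v_2))\,p\,\partial_x \eta \, dxdt = 0.
\end{align*}
The key device is to write the nonlinear differences as linear expressions with bounded, measurable coefficients: $h(v_1) - h(v_2) = a(t,x)\,w$ with $a(t,x) \coloneqq \int_0^1 h'(v_2 + \theta w)\,d\theta$, so that $\delta_h \le a \le C_h$ by (A1), and similarly $b(v_1) - b(v_2) = c(t,x)\,w$ with $c(t,x) \coloneqq \int_0^1 b'(v_2 + \theta w)\,d\theta$, so $|c| \le C_b$ by (A2). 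The identity becomes
\begin{align*}
-\int_{Q(T)} a\,w\,\partial_t \eta\, dxdt + \int_{Q(T)} (\partial_x w)\,\partial_x \eta\, dxdt + \int_{Q(T)} c\,p\,w\,\partial_x \eta\, dxdt = 0.
\end{align*}

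The idea is now, for a fixed but arbitrary $\phi \in C_c^\infty(Q(T))$ (or $\phi \in L^2(Q(T))$), to choose $\eta$ as the solution of the backward-in-time linear parabolic dual problem
\begin{align*}
-a\,\partial_t \eta - \partial_{xx} \eta + c\,p\,\partial_x \eta = a\,\phi \quad \text{in } Q(T), \qquad \partial_x \eta = 0 \text{ at } x=0,1, \qquad \eta(T,\cdot) = 0,
\end{align*}
which, after plugging into the identity above, would formally give $\int_{Q(T)} a\,w\,\phi = 0$; since $a \ge \delta_h > 0$ and $\phi$ is arbitrary, this forces $w = 0$. To make this rigorous I would regularize the coefficients: replace $a$ by smooth $a_\varepsilon$ with $\delta_h \le a_\varepsilon \le C_h$, $a_\varepsilon \to a$ a.e.\ and in $L^q$, and similarly mollify $c$ and $p$ (using $p \in L^4(0,T;H)$) to get a smooth drift coefficient $g_\varepsilon \coloneqq c_\varepsilon p_\varepsilon$. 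Solve the regularized dual problem by standard linear parabolic theory (it is uniformly parabolic with smooth bounded coefficients), obtaining $\eta_\varepsilon$ with enough regularity to be a legitimate test function in \eqref{P_D2} — in particular $\eta_\varepsilon \in W^{1,2}(0,T;H) \cap L^2(0,T;X)$ and $\eta_\varepsilon(T,\cdot)=0$. Testing the $w$-identity with $\eta_\varepsilon$ produces
\begin{align*}
\int_{Q(T)} a\,w\,\phi\, dxdt = \int_{Q(T)} (a - a_\varepsilon)\,w\,\phi\, dxdt + \int_{Q(T)} (g_\varepsilon - c\,p)\,w\,\partial_x \eta_\varepsilon\, dxdt + \int_{Q(T)} a_\varepsilon w (\partial_t\eta_\varepsilon)\left(1 - \tfrac{a}{a_\varepsilon}\right) dxdt,
\end{align*}
up to rearrangement of terms, and one sends $\varepsilon \to 0$.

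The main obstacle — and the reason the hypothesis is $p \in L^4(0,T;H)$ rather than merely $L^2$ — is obtaining \emph{$\varepsilon$-uniform a priori estimates} on the dual solution $\eta_\varepsilon$ strong enough to pass to the limit in the error terms above, in particular a uniform bound on $\partial_x \eta_\varepsilon$ in a space that pairs with $w \cdot (g_\varepsilon - cp)$, where $w \in L^\infty(0,T;H)\cap L^2(0,T;X)$ and $p \in L^4(0,T;H)$. The standard energy estimate for the dual problem, multiplying by $\partial_t \eta_\varepsilon$ and by $-\partial_{xx}\eta_\varepsilon$ and using Gronwall, yields bounds on $\eta_\varepsilon$ in $L^\infty(0,T;X)$ and $\partial_t\eta_\varepsilon$, $\partial_{xx}\eta_\varepsilon$ in $L^2(Q(T))$, but the drift term $\int c_\varepsilon p_\varepsilon \partial_x\eta_\varepsilon \cdot (\text{test})$ must be controlled: here one uses $\|p_\varepsilon\|_{L^4(0,T;H)}$ together with the Gagliardo–Nirenberg / Agmon inequality in one space dimension $\|\partial_x\eta_\varepsilon\|_{L^\infty(0,1)} \lesssim \|\eta_\varepsilon\|_{X}^{1/2}\|\eta_\varepsilon\|_{H^2}^{1/2}$ (or $\|\partial_x\eta_\varepsilon\|_{L^4} \lesssim \|\eta_\varepsilon\|_X^{3/4}\|\eta_\varepsilon\|_{H^2}^{1/4}$), and absorbs the top-order factor into the parabolic dissipation, the remaining powers of $\|p_\varepsilon\|_H$ being integrable in $t$ precisely because of the fourth power. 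Once these uniform bounds are in hand, each error term is estimated by Hölder's inequality and converges to zero (the first by dominated convergence since $a_\varepsilon \to a$ boundedly, the second since $g_\varepsilon \to cp$ in $L^2(0,T;H)$ or similar against the uniformly bounded $w\,\partial_x\eta_\varepsilon$, the third since $a/a_\varepsilon \to 1$ boundedly), leaving $\int_{Q(T)} a\,w\,\phi = 0$ for all $\phi$, hence $w=0$ a.e., i.e.\ $v_1 = v_2$. I would also need to check the compatibility of the initial term — since both solutions share $v_0$ and $\eta_\varepsilon(T,\cdot)=0$, the boundary contributions at $t=0$ and $t=T$ drop out cleanly, so no extra care is needed there beyond verifying $\eta_\varepsilon$ is an admissible test function.
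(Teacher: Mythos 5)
Your proposal is correct and follows essentially the same route as the paper: the dual equation method with bounded quotient coefficients, mollification of those coefficients, solvability of the regularized nondegenerate linear dual problem with Neumann data, $\varepsilon$-uniform $L^\infty(0,T;H^1)\cap L^2(0,T;H^2)$ estimates obtained from the one-dimensional interpolation inequality \eqref{1DGNiineq_Linf} in which the hypothesis $p \in L^4(0,T;H)$ is exactly what closes the Gronwall argument for the drift term, and then passage to the limit in the error terms using $v_1-v_2 \in L^4(Q(T))$. The only cosmetic difference is that you linearize in $w = v_1 - v_2$, putting the variable coefficient $a$ on $\partial_t\eta$ with constant diffusion, whereas the paper linearizes in $z = h(v_1)-h(v_2)$, putting the variable coefficient $\sigma$ on $\partial_x^2\zeta$; the two formulations are equivalent and the estimates are analogous.
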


\begin{remark}
For the uniqueness of weak solutions to (P)($p, v_0$), Theorem \ref{main_thm} improves the result by Chiyo--Terasaki--Tsuzuki--Yokota \cite{C-T-T-Y}.
In fact, the authors in \cite{C-T-T-Y} assumed $p \in L^\infty(0,T; L^\infty(0,1)) \cap L^2(0,T; H)$.
On the other hand, we assume only $p \in L^4(0, T; H)$.
\end{remark}

We next construct approximate solutions converging to the weak solution $v$ of (P)($p, v_0$) given in Proposition \ref{P_e}.
Firstly, we divide the interval $(0, 1)$ into $n$ equal parts and set $\Delta x^{(n)} = 1/n$.
Based on the FVM, we discretize (P)($p, v_0$) in space and derive the following Cauchy problem for a system of ordinary differential equations with respect to the time variable:
\begin{align}
\label{OP_eq}
h'(v_i^{(n)})\frac{d}{dt}v_{i}^{(n)} &= 
\begin{dcases}
\frac{1}{\Delta x^{(n)}} \left( \frac{v_2^{(n)} - v_1^{(n)}}{\Delta x^{(n)}} + b\left( \frac{v_1^{(n)} + v_2^{(n)}}{2} \right) p_{1}^{(n)} \right),  &\qquad i = 1, \\
\frac{1}{\Delta x^{(n)}} \left( \frac{v_{i+1}^{(n)} - v_i^{(n)}}{\Delta x^{(n)}} + b\left( \frac{v_i^{(n)} + v_{i+1}^{(n)}}{2} \right) p_{i}^{(n)} \right. \\
\qquad \left. - \left( \frac{v_i^{(n)} - v_{i-1}^{(n)}}{\Delta x^{(n)}} + b\left( \frac{v_{i-1}^{(n)} + v_{i}^{(n)}}{2} \right) p_{i-1}^{(n)} \right)\right),  &\qquad i = 2, \ldots, n-1, \\
-\frac{1}{\Delta x^{(n)}} \left( \frac{v_n^{(n)} - v_{n-1}^{(n)}}{\Delta x^{(n)}} + b\left( \frac{v_{n-1}^{(n)} + v_n^{(n)}}{2} \right) p_{n-1}^{(n)} \right),  &\qquad i = n, 
\end{dcases}\\
\label{OP_IC}
v_i^{(n)}(0) &= v_{0, i}^{(n)}, \qquad i = 1, 2, \ldots, n,
\end{align}
where
\begin{align*}
v_{i}^{(n)}(t) &\coloneqq \frac{1}{\Delta x^{(n)}} \int_{(i-1)\Delta x^{(n)}}^{i \Delta x^{(n)}} v(t, \xi)  d\xi,\\
p_{i}^{(n)}(t) &\coloneqq \frac{1}{\Delta x^{(n)}} \int_{(i-1)\Delta x^{(n)}}^{i \Delta x^{(n)}} p (t, \xi) d\xi,\\
v_{0, i}^{(n)} &\coloneqq \frac{1}{\Delta x^{(n)}} \int_{(i-1)\Delta x^{(n)}}^{i \Delta x^{(n)}} v_0(\xi)  d\xi
\end{align*}
for $i = 1, 2, \ldots, n$.
Let (OP)$^{(n)} (\bm{p}^{(n)}, \bm{v}_0^{(n)})$ denote the Cauchy problem \eqref{OP_eq} and \eqref{OP_IC}, where $\bm{p}^{(n)} \coloneqq (p_{1}^{(n)}, \ldots, p_{n}^{(n)})$ and $\bm{v}_0^{(n)} \coloneqq (v_{0, 1}^{(n)}, \ldots, v_{0, n}^{(n)})$.
Secondly, we approximate $p$ by $\rho_\delta \coloneqq J_\delta^{(2)} \ast p$ for $\delta > 0$, where $J_\delta^{(2)}$ is a mollifier on $\mathbb{R}^2$, $\ast$ is the convolution in $\mathbb{R}^2$, and $p$ is defined on $\mathbb{R}^2$ by zero extention.
We put
\begin{align*}
\rho_{\delta, i}^{(n)}(t) \coloneqq \frac{1}{\Delta x^{(n)}} \int_{(i-1)\Delta x^{(n)}}^{i \Delta x^{(n)}} \rho_\delta(t, \xi)  d\xi \qquad \text{for $i = 1, 2, \ldots, n$}
\end{align*}
and set $\bm{\rho}_\delta^{(n)} \coloneqq (\rho_{\delta, 1}^{(n)}, \ldots, \rho_{\delta, n}^{(n)})$.
By a simple computation, for each $n \in \mathbb{N}$ and $\delta > 0$, (OP)$^{(n)}(\bm{\rho}_{\delta}^{(n)}, \bm{v}_{0}^{(n)})$ has a unique solution $(v_{\delta, 1}^{(n)}, \ldots, v_{\delta, n}^{(n)}) \in (C^1(\mathbb{R}))^n$. 
By using this solution, we set
\begin{align}
\label{NS} v_{\delta}^{(n)} (t, x) \coloneqq \sum^{n}_{i=1}  \chi_i^{(n)}(x) v_{\delta, i}^{(n)}(t) 
\qquad \text{for $(t,x) \in Q(T)$},
\end{align}
where $\chi_i^{(n)}$ is the characteristic function of the interval $[ (i-1)\Delta x^{(n)}, i \Delta x^{(n)})$. Finally, as an approximation of $\partial_x v$, we introduce a divided difference $\widetilde{v}_{\delta}^{(n)}$ by
\begin{align*}
\widetilde{v}_{\delta}^{(n)} (t, x) \coloneqq
\begin{dcases}
0, &\qquad 0 \leq x < \Delta x^{(n)},\\
\frac{v_{\delta}^{(n)}(t, x) - v_{\delta}^{(n)}(t, x - \Delta x^{(n)})}{\Delta x^{(n)}}, &\qquad \Delta x^{(n)} \leq x \leq 1. 
\end{dcases}
\end{align*}
Under the above preparation, we state the main result on the convergence of approximate solutions.

\begin{theorem} \label{AS_conv}
Under the same assumption as in Proposition \ref{P_e}, for each $\delta > 0$, there exists a weak solution $v_{\delta}$ to {\rm(P)($\rho_\delta, v_0$)} such that
\begin{alignat*}{2}
v_{\delta}^{(n)} &\to v_\delta
&&\qquad \text{strongly in $L^2(0, T; H)$ and
 weakly$\ast$ in $L^\infty(0, T; H)$}, \\
\widetilde{v}_{\delta}^{(n)} &\to \partial_x v_\delta 
&&\qquad \text{weakly in $L^2(0, T; H)$}
\end{alignat*}
as $n \to \infty$.
Moreover, there exist a subsequence $\{ \delta_j \}$ of $\{ \delta \}$ and a weak solution $v$ to {\rm(P)($p, v_0$)} such that 
\begin{alignat*}{2}
v_{\delta_j} &\to v 
&&\quad \text{strongly in $L^2(0, T; H)$,  
 weakly in $L^2(0, T; X)$, and  
 weakly$\ast$ in $L^\infty(0, T; H)$}
\end{alignat*}
as $j \to \infty$.
If $p \in L^4(0, T; H)$ additionally, then the above convergence holds for the whole sequence.
\end{theorem}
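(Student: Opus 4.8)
The plan is to follow the standard finite-volume convergence scheme, working first with fixed mollification parameter $\delta$. The first step is to derive $n$-uniform (and $\delta$-uniform, where possible) a priori estimates for the ODE solution $\bm v_\delta^{(n)}$. Testing the discrete system \eqref{OP_eq} by $\Delta x^{(n)}(v_{\delta,i}^{(n)})$ and summing over $i$ — this is the discrete analogue of multiplying \eqref{P_eq} by $v$ — the diffusive terms telescope to produce $|\widetilde v_\delta^{(n)}(t,\cdot)|_H^2$, the time-derivative term gives (using $h'\ge\delta_h$ from (A1)) control of $\tfrac{d}{dt}\int_0^1 \widehat H(v_\delta^{(n)})\,dx$ for a suitable convex primitive, and the $b(\cdot)p$ term is absorbed by Young's inequality using $\delta_b\le b\le C_b$ and $|p_i^{(n)}|$ bounds from (A2). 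This yields
\begin{align*}
\|v_\delta^{(n)}\|_{L^\infty(0,T;H)} + \|\widetilde v_\delta^{(n)}\|_{L^2(0,T;H)} \le C,
\end{align*}
with $C$ independent of $n$ and $\delta$ (since $\|\rho_\delta\|_{L^2(0,T;H)}\le\|p\|_{L^2(0,T;H)}$). Here I expect a minor technical point in relating $\int\widehat H(v_\delta^{(n)}(0))\,dx$ to $\int\widehat H(v_0)\,dx$ via Jensen's inequality applied to the cell averages $v_{0,i}^{(n)}$.

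The second step is compactness. From the gradient estimate one obtains a uniform bound on a discrete $L^2(0,T;X)$-type norm of $v_\delta^{(n)}$; combined with a bound on a discrete time-derivative of $h(v_\delta^{(n)})$ read off from \eqref{OP_eq} (giving, say, boundedness in $L^2(0,T;X^*)$ of the piecewise-constant interpolant of $\tfrac{d}{dt}h(v_{\delta,i}^{(n)})$), an Aubin–Lions–Simon type argument — in the discrete form used by Eymard–Gallouët–Herbin / Gallouët–Latché — gives strong convergence of $v_\delta^{(n)}$ in $L^2(0,T;H)$ along a subsequence, to some limit $v_\delta$, with $\widetilde v_\delta^{(n)}\rightharpoonup \partial_x v_\delta$ weakly in $L^2(0,T;H)$ (identification of the weak limit of the divided differences with the distributional derivative is routine) and $v_\delta^{(n)}\to v_\delta$ weakly$\ast$ in $L^\infty(0,T;H)$. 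The third step is to pass to the limit in the weak formulation: rewrite the ODE system \eqref{OP_eq} against a smooth test function $\eta$, identify each discrete sum as a Riemann-sum approximation of the corresponding integral in \eqref{P_D2} (with $\rho_\delta$ in place of $p$), use strong convergence of $v_\delta^{(n)}$ together with continuity of $h$ and $b$ (and a standard argument that $b\big(\tfrac{v_{\delta,i}^{(n)}+v_{\delta,i+1}^{(n)}}{2}\big)\to b(v_\delta)$ in $L^2$) to pass to the limit in the nonlinear terms, and handle the initial datum via \eqref{OP_IC}. This shows $v_\delta$ is a weak solution of (P)($\rho_\delta,v_0$); since the whole sequence converges is not yet known at this stage, the convergences for $v_\delta^{(n)}$ hold a priori only up to subsequence — but once Theorem \ref{main_thm} is invoked at the level of $\rho_\delta\in L^4(0,T;H)$ (mollification preserves $L^\infty_t$, hence $L^4_t$, regularity — actually $\rho_\delta$ is smooth), uniqueness forces convergence of the full sequence in $n$.

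The fourth step is the limit $\delta\to 0$. The estimates from Step 1 being $\delta$-uniform, the same compactness argument gives a subsequence $v_{\delta_j}\to v$ strongly in $L^2(0,T;H)$, weakly in $L^2(0,T;X)$, weakly$\ast$ in $L^\infty(0,T;H)$. Since $\rho_\delta\to p$ in $L^2(0,T;H)$, and $b(v_{\delta_j})\to b(v)$ strongly in $L^2(Q(T))$ by strong $H$-convergence and boundedness of $b$, the product $b(v_{\delta_j})\rho_{\delta_j}\rightharpoonup b(v)p$ in $L^1$, enough to pass to the limit in \eqref{P_D2}; one also checks the time-derivative bound on $h(v_{\delta_j})$ survives. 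Hence $v$ is a weak solution of (P)($p,v_0$). Finally, if $p\in L^4(0,T;H)$, Theorem \ref{main_thm} applies directly and pins down the limit uniquely, so the whole family $\{v_\delta\}$ converges.

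The main obstacle I anticipate is the discrete compactness / Aubin–Lions argument in Step 2: establishing a clean, $n$-uniform bound on the time-translates of $v_\delta^{(n)}$ (or equivalently of $h(v_\delta^{(n)})$) in a negative norm, and then upgrading weak to strong $L^2(0,T;H)$ convergence in a way that is compatible with the piecewise-constant-in-space, $C^1$-in-time structure of the approximations — the nonlinearity $h$ inside the time derivative means one must compactness-argue on $h(v_\delta^{(n)})$ and then transfer back using $\delta_h\le h'\le C_h$, which is where (A1) is essential. The passage through the nonlinear boundary-like terms $b\big(\tfrac{v_i+v_{i+1}}{2}\big)p_i^{(n)}$ in the limit is a secondary technical nuisance but is handled by the usual "strong $\times$ weak" pairing once strong convergence of $v_\delta^{(n)}$ is in hand.
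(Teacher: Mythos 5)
Your plan follows essentially the same route as the paper: discrete energy estimates obtained by testing \eqref{OP_eq} with $v_{\delta,i}^{(n)}$ (via the primitive $\widehat{h}$), a bound on the discrete time derivative of $h(v_\delta^{(n)})$ in $L^2(0,T;X^\ast)$, the Gallou\"et--Latch\'e discrete Aubin--Lions lemma applied to $z_\delta^{(n)}=h(v_\delta^{(n)})$ with the limit transferred back through the Lipschitz inverse $h^{-1}$, passage to the limit in the weak formulation, uniqueness (Theorem \ref{main_thm}, valid since $\rho_\delta\in L^4(0,T;H)$) to upgrade to whole-sequence convergence in $n$, and then the analogous $\delta\to 0$ argument with the standard Aubin--Lions theorem. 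The technical points you flag (compactness on $h(v_\delta^{(n)})$ rather than $v_\delta^{(n)}$, and the strong-times-weak handling of the $b(\cdot)\rho_\delta$ terms) are exactly how the paper resolves them.
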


In the proof of Theorem \ref{AS_conv}, the discretized Aubin-Lions compactness theorem, established by Gallou\"{e}t--Latch\'{e} \cite{G-L}, plays a crucial role (see Lemma \ref{discrete_Aubin} in Section \ref{AS_proof}). 
We remark that Theorem \ref{AS_conv} provides an alternative proof based on the FVM.

At the end of this section, we give the following preliminary lemma, which follows from straightforward computations.

\begin{lemma}\label{lem:1DGNineq}
For any $u \in X$, it holds that
\begin{align}
\label{1DGNiineq_Linf} |u|^2_{L^\infty(0,1)}
\leq |u|_H^2 + 2|u|_H |\partial_x u|_H.
\end{align}
\end{lemma}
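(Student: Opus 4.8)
The plan is to exploit the one-dimensional setting, in which every $u \in X = H^1(0,1)$ is (the continuous representative of) an absolutely continuous function on $[0,1]$, and to estimate $|u(x)|$ by transporting information from a favorably chosen reference point. First I would pick $y \in [0,1]$ at which $|u|$ attains its minimum over the compact interval $[0,1]$; comparing the pointwise bound $u(\xi)^2 \geq u(y)^2$ with its integral over $(0,1)$ gives $u(y)^2 \leq \int_0^1 u(\xi)^2 \, d\xi = |u|_H^2$.

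Next, for an arbitrary $x \in [0,1]$, I would use that $u^2 \in W^{1,1}(0,1)$ (by the product rule, since $u \in H^1(0,1) \hookrightarrow C([0,1])$) together with the fundamental theorem of calculus to write
\begin{align*}
u(x)^2 = u(y)^2 + \int_y^x \frac{d}{d\xi}\bigl(u(\xi)^2\bigr) \, d\xi = u(y)^2 + 2\int_y^x u(\xi)\, \partial_x u(\xi) \, d\xi.
\end{align*}
Bounding the last term by its absolute value and enlarging the interval of integration to $(0,1)$ yields $u(x)^2 \leq |u|_H^2 + 2\int_0^1 |u(\xi)|\,|\partial_x u(\xi)| \, d\xi$, and one application of the Cauchy--Schwarz inequality estimates the remaining integral by $|u|_H\,|\partial_x u|_H$.

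Putting these together gives $u(x)^2 \leq |u|_H^2 + 2|u|_H\,|\partial_x u|_H$ for every $x \in [0,1]$; taking the supremum over $x \in (0,1)$ then produces \eqref{1DGNiineq_Linf}. There is essentially no obstacle in this argument---the only step demanding any care is the justification that the chain/product rule and the fundamental theorem of calculus apply to $u^2$, which is immediate from the Sobolev embedding $H^1(0,1) \hookrightarrow C([0,1])$ and the fact that products of $H^1(0,1)$ functions lie in $W^{1,1}(0,1)$.
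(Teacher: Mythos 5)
Your argument is correct and is precisely the ``straightforward computation'' the paper alludes to (it omits the proof): choose a point $y$ where $|u(y)|^2\leq |u|_H^2$, write $u(x)^2-u(y)^2=2\int_y^x u\,\partial_x u\,d\xi$ using the absolute continuity of $u^2$ for $u\in H^1(0,1)$, and conclude by Cauchy--Schwarz. No gaps; the justification via the embedding $H^1(0,1)\hookrightarrow C([0,1])$ and $u^2\in W^{1,1}(0,1)$ is exactly what is needed.
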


\section{Uniqueness of weak solutions} \label{P_proof_u}
In this section, we give the proof of Theorem \ref{main_thm} by applying the dual equation method developed in Ladyzenskaja--Solonnikov--Ural'ceva \cite{L-S-U} and Niezgodka--Pawlow \cite{N-P}. 
Throughout this section, we suppose the same assumption as in Theorem \ref{main_thm}.

Let $v_1$ and $v_2$ be weak solutions of (P)($p, v_0$).
We put $z_1 \coloneqq h(v_1)$, $z_2 \coloneqq h(v_2)$, and $z \coloneqq z_1 - z_2$.
In addition, we define functions $\sigma: Q(T) \to \mathbb{R}$ and $q: Q(T) \to \mathbb{R}$ as follows:
\begin{align*}
\sigma (t,x)
&\coloneqq
\begin{dcases}
1, &\qquad z (t,x) = 0, \\
\frac{h^{-1}(z_1 (t,x)) - h^{-1}(z_2 (t,x))}{z (t,x)}, &\qquad \text{otherwise},\\
\end{dcases}\\
q (t,x)
&\coloneqq
\begin{dcases}
1, &\qquad z (t,x) = 0, \\
\frac{b(h^{-1}(z_1 (t,x))) - b(h^{-1}(z_2 (t,x)))}{z (t,x)}, &\qquad \text{otherwise}.
\end{dcases}
\end{align*}
We note that $h$ has the inverse by virtue of (A1).
It follows from (A1) and (A2) that there exist positive constants $\delta_\sigma, C_\sigma$, and $C_q$ such that 
\begin{align} \label{sigma_q_esti}
\delta_\sigma \leq \sigma \leq C_\sigma, 
\qquad 0 \leq q \leq C_q 
\qquad \text{a.e. on $Q(T)$}.
\end{align}
Let $\zeta \in L^2(0,T; H^2(0,1)) \cap W^{1,2}(0,T; H)$ satisfy $\partial_x \zeta (\cdot, 0) = \partial_x \zeta (\cdot, 1) = 0$ in $(0, T)$ and $\zeta(T, \cdot) = 0$ in $(0, 1)$.
By substituting $\eta = \zeta$ to \eqref{P_D2} and using integration by parts, we have
\begin{align}
\label{B2_diff}
- \int^T_0 \int^1_0 z (\partial_t \zeta - \sigma \partial_x^2 \zeta - \varphi \partial_x \zeta) dx dt = 0,
\end{align}
where $\varphi \coloneqq q p$.
We remark that $\varphi \in L^4(0,T; H)$.
For $\varepsilon > 0$, we set $\sigma_\varepsilon \coloneqq J_\varepsilon^{(2)} \ast \sigma$.
In addition, we define
\begin{align*}
\varphi_\varepsilon (t, x) \coloneqq \int_\mathbb{R} J_\varepsilon^{(1)}(t-s) \int_\mathbb{R} J_\varepsilon^{(1)}(x-y) \varphi(s, y) dy ds \qquad \text{for $(t, x) \in \mathbb{R}^2$},
\end{align*}
where $J_\varepsilon^{(1)}$ is a mollifier on $\mathbb{R}$ and the function $\varphi$ is extended to $\mathbb{R}^2$ by zero.
From the basic properties of the mollifier, we see that $\sigma_\varepsilon$ and $\varphi_\varepsilon$ are continuous on $\overline{Q(T)}$ and satisfy
\begin{align}
\label{sigma_esti}
&\delta_\sigma \leq \sigma_\varepsilon \leq C_\sigma \qquad \text{on $Q(T)$},\\ 
\label{varphi_esti}
&|\varphi_\varepsilon|_{L^2(Q(T))} \leq C_q |p|_{L^2(Q(T))}, \qquad
|\varphi_\varepsilon|_{L^4(0,T; H)} \leq C_q |p|_{L^4(0,T; H)}
\qquad \text{for $\varepsilon > 0$},
\end{align}
where $\delta_\sigma$ and $C_\sigma$ are the positive constants in \eqref{sigma_q_esti}.
Moreover, it holds that $\sigma_\varepsilon \to \sigma$ in $L^4(Q(T))$ and $\varphi_\varepsilon \to \varphi$ in $L^4(0,T; H)$ as $\varepsilon \to 0$.
For every $\varepsilon > 0$ and $\xi \in C^\infty_0(Q(T))$, let (DP)($\varepsilon, \xi$) denote the following initial-boundary value problem:
\begin{align}
\label{DP_eq}
&\partial_t \zeta_{\varepsilon} - \sigma_\varepsilon \partial_x^2 \zeta_{\varepsilon} - \varphi_\varepsilon \partial_x \zeta_{\varepsilon} = \xi 
&&\text{in $Q(T)$}, \\
\label{DP_BC}
&\partial_x \zeta_{\varepsilon}(t, x) = 0 
&&\text{at $x = 0,1$ and for $0 < t < T$},\\
\label{DP_IC} &\zeta_{\varepsilon}(0, x) = 0 
&&\text{for $x \in (0,1)$}.
\end{align}
The problem (DP)($\varepsilon, \xi$) is an approximation of the time reversal problem obtained from \eqref{B2_diff}.
In order to solve (DP)($\varepsilon, \xi$), for fixed $\widetilde{\zeta} \in L^2(0,T; X)$, we introduce an auxiliary problem \eqref{DP_BC}--\eqref{DP_eq_aux}, denoted by (AP)($\varepsilon, \xi, \widetilde{\zeta}$):
\begin{align}
\label{DP_eq_aux}
\partial_t \zeta_{\varepsilon} - \sigma_\varepsilon \partial_x^2 \zeta_{\varepsilon} - \varphi_\varepsilon \partial_x \widetilde{\zeta} = \xi
\qquad \text{in $Q(T)$}.
\end{align}

The existence and uniqueness of strong solutions to (AP)($\varepsilon, \xi, \widetilde{\zeta}$) follows from the standard argument, since \eqref{DP_eq_aux} is linear and $\sigma_\varepsilon$ is smooth and strictly positive.

\begin{lemma} \label{AP_lem}
Let $\varepsilon > 0$ and let $\xi \in C^\infty_0 (Q(T))$.
If $\widetilde{\zeta} \in L^2(0,T; X)$, then {\rm (AP)($\varepsilon, \xi, \widetilde{\zeta}$)} has a unique strong solution $\xi_\varepsilon \in W^{1,2} (0,T; H) \cap L^\infty (0,T; X) \cap L^2(0,T; H^2(0,1))$.
\end{lemma}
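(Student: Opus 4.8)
The plan is to recognise that, for fixed $\varepsilon > 0$ and fixed data $\xi$ and $\widetilde\zeta$, the problem (AP)($\varepsilon,\xi,\widetilde\zeta$) is nothing but a linear, uniformly parabolic equation in non-divergence form with a smooth coefficient, and to solve it by a Faedo--Galerkin scheme tailored to the homogeneous Neumann condition. First I would set $f \coloneqq \xi + \varphi_\varepsilon \partial_x \widetilde\zeta$ and check that $f \in L^2(0,T;H)$: indeed $\xi \in C_0^\infty(Q(T)) \subset L^2(0,T;H)$, while $\varphi_\varepsilon$ is continuous on $\overline{Q(T)}$, hence bounded, and $\partial_x\widetilde\zeta \in L^2(0,T;H)$ since $\widetilde\zeta \in L^2(0,T;X)$, so $\varphi_\varepsilon\partial_x\widetilde\zeta \in L^2(0,T;H)$. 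I would also record that, for fixed $\varepsilon$, $\sigma_\varepsilon = J_\varepsilon^{(2)}\ast\sigma$ is $C^\infty$ with $\partial_x\sigma_\varepsilon$ bounded on $\overline{Q(T)}$, in addition to $\delta_\sigma \le \sigma_\varepsilon \le C_\sigma$ by \eqref{sigma_esti}. With these reductions \eqref{DP_eq_aux} becomes $\partial_t\zeta_\varepsilon - \sigma_\varepsilon\partial_x^2\zeta_\varepsilon = f$ in $Q(T)$, with \eqref{DP_BC} and $\zeta_\varepsilon(0,\cdot)=0$.

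For the construction I would use the orthonormal basis $\{e_k\}_{k\ge 0}$ of $H$ consisting of the Neumann eigenfunctions of $-\partial_x^2$ on $(0,1)$ (so $e_0\equiv 1$, $e_k(x) = \sqrt 2\cos(k\pi x)$, $-\partial_x^2 e_k = k^2\pi^2 e_k$, and $e_k'(0)=e_k'(1)=0$) and seek $\zeta_\varepsilon^{(m)}(t,x) = \sum_{k=0}^m c_k^{(m)}(t)e_k(x)$ solving the Galerkin system obtained by testing the equation against $e_0,\dots,e_m$, with $c_k^{(m)}(0)=0$. Since the span of $\{e_0,\dots,e_m\}$ is invariant under $\partial_x^2$, this is a linear ODE system with coefficients $t\mapsto \int_0^1\sigma_\varepsilon(t,\cdot)e_je_k\,dx$ continuous in $t$ and with an $L^2(0,T)$ source, so it has a unique absolutely continuous solution on $[0,T]$. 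Two a priori estimates then suffice: testing with $\zeta_\varepsilon^{(m)}$, integrating by parts once (the boundary term vanishes because $\partial_x\zeta_\varepsilon^{(m)}$ vanishes at $x=0,1$), using $\sigma_\varepsilon\ge\delta_\sigma$ and the bound on $\partial_x\sigma_\varepsilon$ together with Young's and Gronwall's inequalities gives a bound in $L^\infty(0,T;H)\cap L^2(0,T;X)$; testing with $-\partial_x^2\zeta_\varepsilon^{(m)}$ (again admissible, by invariance of the span), using $\sigma_\varepsilon\ge\delta_\sigma$ and $f\in L^2(0,T;H)$, gives a bound in $L^\infty(0,T;X)\cap L^2(0,T;H^2(0,1))$; finally, from the equation itself, $\partial_t\zeta_\varepsilon^{(m)}$ is bounded in $L^2(0,T;H)$. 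Extracting a weakly/weakly-$\ast$ convergent subsequence and passing to the limit (everything is linear) yields $\zeta_\varepsilon \in W^{1,2}(0,T;H)\cap L^\infty(0,T;X)\cap L^2(0,T;H^2(0,1))$ satisfying \eqref{DP_eq_aux} a.e. in $Q(T)$; the boundary condition \eqref{DP_BC} survives the limit because $u\mapsto(\partial_x u(0),\partial_x u(1))$ is weakly continuous on $H^2(0,1)$ and annihilates each $\zeta_\varepsilon^{(m)}$, and $\zeta_\varepsilon(0,\cdot)=0$ follows from $\zeta_\varepsilon^{(m)}\in C([0,T];H)$ and weak continuity of evaluation at $t=0$.

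Uniqueness is routine: if $\zeta_\varepsilon^{(1)}$ and $\zeta_\varepsilon^{(2)}$ are two strong solutions, then $w \coloneqq \zeta_\varepsilon^{(1)} - \zeta_\varepsilon^{(2)}$ solves $\partial_t w - \sigma_\varepsilon\partial_x^2 w = 0$ with $\partial_x w = 0$ at $x=0,1$ and $w(0,\cdot)=0$; testing with $w$, integrating by parts (the boundary term vanishing by \eqref{DP_BC}), using $\sigma_\varepsilon\ge\delta_\sigma$, the bound on $\partial_x\sigma_\varepsilon$, and Gronwall's lemma forces $w\equiv 0$. The only points requiring a little care — and the reason one still calls the argument ``standard'' rather than immediate — are that $\sigma_\varepsilon$ is a nonconstant coefficient, so the basic energy estimate produces a cross term involving $\partial_x\sigma_\varepsilon$ that must be absorbed by Young's inequality, and that the Galerkin basis must be compatible with the homogeneous Neumann condition so that \eqref{DP_BC} is preserved in the limit; neither is a genuine difficulty once $\varepsilon>0$ is frozen. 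Alternatively, after rewriting $\sigma_\varepsilon\partial_x^2\zeta_\varepsilon = \partial_x(\sigma_\varepsilon\partial_x\zeta_\varepsilon) - (\partial_x\sigma_\varepsilon)\partial_x\zeta_\varepsilon$, one may simply invoke classical $L^2$ parabolic regularity for divergence-form equations as in Ladyzenskaja--Solonnikov--Ural'ceva \cite{L-S-U}, together with the one-dimensional embedding $W^{1,2}(0,T;H)\cap L^2(0,T;H^2(0,1))\hookrightarrow C([0,T];X)$.
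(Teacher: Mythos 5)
Your proof is correct: the paper offers no argument for Lemma \ref{AP_lem} beyond the remark that it follows from the standard theory for a linear, uniformly parabolic equation with smooth, strictly positive coefficient, and your Faedo--Galerkin construction with the Neumann cosine basis, the two energy estimates (testing with $\zeta_\varepsilon^{(m)}$ and with $-\partial_x^2\zeta_\varepsilon^{(m)}$), the limit passage, and the energy-method uniqueness is a complete and accurate instantiation of exactly that standard argument. The alternative you mention, rewriting the operator in divergence form and invoking classical $L^2$ parabolic regularity as in Ladyzenskaja--Solonnikov--Ural'ceva, is equally consistent with the paper's intent.
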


For each $\varepsilon > 0$, we define a mapping $\Gamma_\varepsilon \colon L^2(0, T; X) \to L^2(0, T; X)$ by $\Gamma_\varepsilon (\widetilde{\zeta}) \coloneqq \zeta_{\varepsilon}$ for $\widetilde{\zeta} \in L^2(0,T; X)$, where $\zeta_{\varepsilon}$ is the strong solution of (AP)($\varepsilon, \xi, \widetilde{\zeta}$) given by Lemma \ref{AP_lem}.

\begin{lemma} \label{DP_lem_1}
Let $\varepsilon > 0$, $\xi \in C^\infty_0 (Q(T))$, and $\widetilde{\zeta}_i \in L^2(0,T; X)$ with $i = 1, 2$.
Define $\widetilde{\zeta} \coloneqq \widetilde{\zeta}_1 - \widetilde{\zeta}_2$ and $\zeta_{\varepsilon} \coloneqq \zeta_{\varepsilon, 1} - \zeta_{\varepsilon, 2}$, where $\zeta_{\varepsilon, i} \coloneqq \Gamma_\varepsilon(\widetilde{\zeta}_i)$ for $i = 1, 2$.
Then there exist positive constants $C_3$ and $C_4$ independent of $\widetilde{\zeta}_1$ and $\widetilde{\zeta}_2$ such that
\begin{align}
\label{DP_esti1}
\frac{1}{2} |\zeta_{\varepsilon}(t)|_H^2 
+ \frac{\delta_\sigma}{2} \int^t_0 |\partial_x \zeta_{\varepsilon}(\tau)|_H^2 d\tau
&\leq C_3 \int^t_0 |\widetilde{\zeta}(\tau)|_X^2 d\tau,\\
\label{DP_esti2}
\frac{\delta_\sigma}{2} |\partial_x \zeta_{\varepsilon}(t)|_H^2 
+ \frac{1}{2} \int^t_0 |\partial_t \zeta_{\varepsilon}(\tau)|_H^2 d\tau
&\leq C_4 \int^t_0 |\widetilde{\zeta}(\tau)|_X^2 d\tau
\end{align}
for any $t \in [0, T]$.
\end{lemma}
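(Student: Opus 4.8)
The plan is to pass to the difference $\zeta_\varepsilon = \zeta_{\varepsilon,1}-\zeta_{\varepsilon,2}$. Since \eqref{DP_eq_aux} is linear in $(\zeta_\varepsilon,\widetilde\zeta)$ and the source $\xi$ is the same for $i=1,2$, subtracting the two equations and the two (homogeneous) initial and boundary conditions shows that $\zeta_\varepsilon$ solves
\begin{align*}
\partial_t\zeta_\varepsilon - \sigma_\varepsilon\partial_x^2\zeta_\varepsilon = \varphi_\varepsilon\,\partial_x\widetilde\zeta \qquad \text{in } Q(T),
\end{align*}
with $\partial_x\zeta_\varepsilon(\cdot,0)=\partial_x\zeta_\varepsilon(\cdot,1)=0$ in $(0,T)$ and $\zeta_\varepsilon(0,\cdot)=0$ in $(0,1)$. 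Throughout I will use, without further comment, that $\sigma_\varepsilon$ is smooth with $\sigma_\varepsilon\ge\delta_\sigma$ and that $\partial_t\sigma_\varepsilon$, $\partial_x\sigma_\varepsilon$ and $\varphi_\varepsilon$ are bounded on $\overline{Q(T)}$ by constants that may depend on $\varepsilon$ (and on the fixed data) but not on $\widetilde\zeta_1,\widetilde\zeta_2$; the regularity of $\zeta_\varepsilon$ granted by Lemma \ref{AP_lem} makes the energy computations below legitimate, and where a time differentiation under the integral sign is needed one may instead argue at a Galerkin/regularized level and pass to the limit.

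For \eqref{DP_esti1}, I would test the equation with $\zeta_\varepsilon$. Integrating by parts in $x$ in the diffusion term, the boundary contribution vanishing by the Neumann condition, gives
\begin{align*}
\frac12\frac{d}{dt}|\zeta_\varepsilon|_H^2 + \int_0^1\sigma_\varepsilon|\partial_x\zeta_\varepsilon|^2\,dx = -\int_0^1(\partial_x\sigma_\varepsilon)\,\zeta_\varepsilon\,\partial_x\zeta_\varepsilon\,dx + \int_0^1\varphi_\varepsilon\,(\partial_x\widetilde\zeta)\,\zeta_\varepsilon\,dx.
\end{align*}
Estimating the right-hand side with Cauchy--Schwarz and Young's inequality, using $\sigma_\varepsilon\ge\delta_\sigma$ to absorb a term $\tfrac{\delta_\sigma}{2}|\partial_x\zeta_\varepsilon|_H^2$ onto the left, and $|\partial_x\widetilde\zeta|_H^2\le|\widetilde\zeta|_X^2$, yields $\tfrac12\tfrac{d}{dt}|\zeta_\varepsilon|_H^2 + \tfrac{\delta_\sigma}{2}|\partial_x\zeta_\varepsilon|_H^2 \le C_\varepsilon|\zeta_\varepsilon|_H^2 + C_\varepsilon|\widetilde\zeta|_X^2$. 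Integrating over $(0,t)$ with $\zeta_\varepsilon(0)=0$ and then applying Gronwall's lemma to $t\mapsto\int_0^t|\zeta_\varepsilon(\tau)|_H^2\,d\tau$ (the factor $e^{C_\varepsilon T}$ being absorbed into the final constant) gives \eqref{DP_esti1}.

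For \eqref{DP_esti2}, I would test the same equation with $\partial_t\zeta_\varepsilon$. The one non-routine point is the diffusion term: integrating by parts in $x$ (the boundary term again vanishes, now because $\partial_x\zeta_\varepsilon=0$ at $x=0,1$) and then using $\int_0^1\sigma_\varepsilon(\partial_x\partial_t\zeta_\varepsilon)(\partial_x\zeta_\varepsilon)\,dx = \tfrac12\tfrac{d}{dt}\int_0^1\sigma_\varepsilon|\partial_x\zeta_\varepsilon|^2\,dx - \tfrac12\int_0^1(\partial_t\sigma_\varepsilon)|\partial_x\zeta_\varepsilon|^2\,dx$ converts $-\int_0^1\sigma_\varepsilon(\partial_x^2\zeta_\varepsilon)\partial_t\zeta_\varepsilon\,dx$ into $\tfrac12\tfrac{d}{dt}\int_0^1\sigma_\varepsilon|\partial_x\zeta_\varepsilon|^2\,dx$ plus lower-order terms controlled by the $L^\infty$-norms of $\partial_x\sigma_\varepsilon$ and $\partial_t\sigma_\varepsilon$. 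Collecting all contributions and using Young's inequality on the two cross terms to absorb $\tfrac12|\partial_t\zeta_\varepsilon|_H^2$ onto the left, one obtains $\tfrac12\tfrac{d}{dt}\int_0^1\sigma_\varepsilon|\partial_x\zeta_\varepsilon|^2\,dx + \tfrac12|\partial_t\zeta_\varepsilon|_H^2 \le C_\varepsilon|\partial_x\zeta_\varepsilon|_H^2 + C_\varepsilon|\widetilde\zeta|_X^2$. Integrating over $(0,t)$, using $\delta_\sigma|\partial_x\zeta_\varepsilon(t)|_H^2 \le \int_0^1\sigma_\varepsilon(t,\cdot)|\partial_x\zeta_\varepsilon(t,\cdot)|^2\,dx$ and $\partial_x\zeta_\varepsilon(0)=0$, and finally feeding in \eqref{DP_esti1} to bound $\int_0^t|\partial_x\zeta_\varepsilon(\tau)|_H^2\,d\tau$ by $\tfrac{2C_3}{\delta_\sigma}\int_0^t|\widetilde\zeta(\tau)|_X^2\,d\tau$, delivers \eqref{DP_esti2}.

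The main obstacle is less any single estimate than the bookkeeping: one must verify that every constant produced along the way (from the $L^\infty$-bounds on $\partial_t\sigma_\varepsilon,\partial_x\sigma_\varepsilon,\varphi_\varepsilon$ and from the Gronwall step) depends only on $\varepsilon$, $T$, $\delta_\sigma$, $C_\sigma$ and the fixed data, never on $\widetilde\zeta_1$ or $\widetilde\zeta_2$. On the technical side, the other point requiring care is justifying the time-differentiation identity for $\int_0^1\sigma_\varepsilon|\partial_x\zeta_\varepsilon|^2\,dx$ at the regularity level of Lemma \ref{AP_lem}, which is the reason for the regularization-and-passage-to-the-limit remark above.
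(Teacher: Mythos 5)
Your proposal is correct and follows essentially the same route as the paper: test the difference equation with $\zeta_\varepsilon$ (integration by parts, Young, absorb via $\sigma_\varepsilon\ge\delta_\sigma$, Gronwall) for \eqref{DP_esti1}, and with $\partial_t\zeta_\varepsilon$ (rewriting the diffusion term through $\tfrac{d}{dt}\int_0^1\sigma_\varepsilon|\partial_x\zeta_\varepsilon|^2dx$ and the $L^\infty$-bounds on $\partial_t\sigma_\varepsilon,\partial_x\sigma_\varepsilon,\varphi_\varepsilon$) for \eqref{DP_esti2}. The only cosmetic difference is that at the last step you substitute \eqref{DP_esti1} to control $\int_0^t|\partial_x\zeta_\varepsilon|_H^2\,d\tau$, whereas the paper applies Gronwall a second time to $\int_0^1\sigma_\varepsilon|\partial_x\zeta_\varepsilon|^2dx$; both yield constants independent of $\widetilde{\zeta}_1,\widetilde{\zeta}_2$.
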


\begin{proof}
From \eqref{DP_eq_aux}, we see that
\begin{align} \label{DP_eq_aux_diff}
\partial_t \zeta_{\varepsilon} = \sigma_\varepsilon \partial_x^2 \zeta_{\varepsilon} + \varphi_\varepsilon \partial_x \widetilde{\zeta} \qquad \text{in $Q(T)$}.
\end{align}
By multiplying \eqref{DP_eq_aux_diff} by $\zeta_{\varepsilon}$, we have
\begin{align*}
\frac{1}{2} \frac{d}{dt} |\zeta_{\varepsilon}|_H^2 
= \int^1_0 \sigma_\varepsilon \zeta_{\varepsilon} \partial_x^2 \zeta_{\varepsilon} dx
+ \int^1_0 \varphi_\varepsilon \zeta_{\varepsilon} \partial_x \widetilde{\zeta} dx \qquad \text{a.e. on $[0, T]$}.
\end{align*}
Applying integration by parts and Young's inequality to the first term on the right-hand side implies
\begin{align*}
\int^1_0 \sigma_\varepsilon \zeta_{\varepsilon} \partial_x^2 \zeta_{\varepsilon} dx
&= -\int^1_0 (\partial_x \sigma_{\varepsilon}) \zeta_{\varepsilon} (\partial_x \zeta_{\varepsilon}) dx 
- \int^1_0 \sigma_\varepsilon |\partial_x \zeta_{\varepsilon}|^2 dx\\
&\leq \widehat{C}_{\sigma} |\zeta_\varepsilon|_H |\partial_x \zeta_{\varepsilon}|_H
- \delta_\sigma \int^1_0 |\partial_x \zeta_{\varepsilon}|^2 dx\\
&\leq -\frac{\delta_\sigma}{2} |\partial_x \zeta_{\varepsilon}|_H^2 
+ \frac{\widehat{C}_{\sigma}^2}{2\delta_\sigma} |\zeta_{\varepsilon}|_H^2
\end{align*}
a.e. on $[0, T]$, where $\widehat{C}_{\sigma} \coloneqq |\partial_x \sigma_{\varepsilon}|_{L^\infty(Q(T))}$ may depend on $\varepsilon$.
For the second term, thanks to Young's inequality, we obtain
\begin{align*}
\int^1_0 \varphi_\varepsilon \zeta_{\varepsilon} \partial_x \widetilde{\zeta} dx 
&\leq C_1|\zeta_\varepsilon|_H^2 + \frac{1}{2}|\widetilde{\zeta}|_X^2
\qquad \text{a.e. on $[0, T]$},
\end{align*}
where $C_1 \coloneqq |\varphi_\varepsilon|^2_{L^\infty(Q(T))}/2$.
Thus, putting $C_2 \coloneqq (\widehat{C}_{\sigma}^2/(2\delta_\sigma)) + C_1 + (1/2)$, we get
\begin{align*}
\frac{1}{2} \frac{d}{dt} |\zeta_{\varepsilon}|_H^2 
+ \frac{\delta_\sigma}{2} |\partial_x \zeta_{\varepsilon}|_H^2
&\leq C_2(|\zeta_\varepsilon|_H^2 + |\widetilde{\zeta}|_X^2)
\qquad \text{a.e. on $[0, T]$}.
\end{align*}
It follows from Gronwall's inequality that there exists a positive constant $C_3$ such that
\begin{align*}
\frac{1}{2} |\zeta_{\varepsilon}(t)|_H^2 
+ \frac{\delta_\sigma}{2} \int^t_0 |\partial_x \zeta_{\varepsilon}(\tau)|_H^2 d\tau
\leq C_3 \int^t_0 |\widetilde{\zeta}(\tau)|_X^2 d\tau 
\qquad \text{for any $t \in [0, T]$},
\end{align*}
which completes the proof of \eqref{DP_esti1}.

We next prove \eqref{DP_esti2}.
Multiplying \eqref{DP_eq_aux_diff} by $\partial_t \zeta_{\varepsilon}$ gives
\begin{align*}
|\partial_t \zeta_{\varepsilon}|_H^2
&= \int^1_0 \sigma_\varepsilon \partial_t \zeta_\varepsilon \partial_x^2 \zeta_\varepsilon dx
+ \int^1_0 \varphi_\varepsilon \partial_t \zeta_\varepsilon \partial_x \widetilde{\zeta} dx\\
&\leq -\frac{1}{2} \frac{d}{dt} \int^1_0 \sigma_\varepsilon |\partial_x \zeta_{\varepsilon}|^2 dx
+ \frac{1}{2}\int^1_0 |\partial_t \sigma_\varepsilon| |\partial_x \zeta_\varepsilon|^2 dx\\
&\hspace{10mm} + \widehat{C}_\sigma^2 |\partial_x \zeta_\varepsilon|_H^2
+ \frac{1}{2} |\partial_t \zeta_\varepsilon|_H^2
+ 2C_1 |\partial_x \widetilde{\zeta}|_H^2
\end{align*}
a.e. on $[0, T]$.
Therefore, we obtain
\begin{align*}
\frac{1}{2}|\partial_t \zeta_\varepsilon|_H^2 
+ \frac{1}{2} \frac{d}{dt} \int^1_0 \sigma_\varepsilon |\partial_x \zeta_\varepsilon|^2 dx
\leq 2 C_1 |\widetilde{\zeta}|_X^2 
+ \left( \frac{\widehat{C}_\sigma'}{2\delta_\sigma} + \frac{\widehat{C}_\sigma^2}{\delta_\sigma} \right) \int^1_0 \sigma_\varepsilon |\partial_x \zeta_\varepsilon|^2 dx
\end{align*}
a.e. on $[0, T]$, where $\widehat{C}_\sigma' \coloneqq |\partial_t \sigma_\varepsilon|_{L^\infty(Q(T))}^2$.
By Gronwall's inequality, there exists a positive constant $C_4$ such that
\begin{align*}
\frac{\delta_\sigma}{2} |\partial_x \zeta_{\varepsilon}(t)|_H^2
+ \frac{1}{2} \int^t_0 |\partial_t \zeta_{\varepsilon}(\tau)|_H^2 d\tau
\leq C_4 \int^t_0 |\widetilde{\zeta}(\tau)|_X^2 d\tau
\qquad \text{for any $t \in [0, T]$}.
\end{align*}
\end{proof}

We next establish the existence and uniqueness of strong solutions to (DP)($\varepsilon, \xi$).

\begin{proposition} \label{DPep_ss}
Let $\varepsilon > 0$.
If $\xi \in C^\infty_0 (Q(T))$, then {\rm (DP)($\varepsilon, \xi$)} has a unique strong solution $\zeta_\varepsilon \in W^{1,2} (0,T; H) \cap L^\infty (0,T; X) \cap L^2(0,T; H^2(0,1))$.
\end{proposition}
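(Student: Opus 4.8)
The plan is to obtain the strong solution of (DP)($\varepsilon, \xi$) as the unique fixed point of the solution operator $\Gamma_\varepsilon\colon L^2(0,T;X)\to L^2(0,T;X)$ of the auxiliary problem. By construction $\Gamma_\varepsilon(\widetilde\zeta)$ satisfies \eqref{DP_eq_aux} together with \eqref{DP_BC} and \eqref{DP_IC}; consequently a function $\zeta_\varepsilon\in L^2(0,T;X)$ is a strong solution of (DP)($\varepsilon, \xi$) if and only if $\Gamma_\varepsilon(\zeta_\varepsilon)=\zeta_\varepsilon$. Hence it is enough to show that $\Gamma_\varepsilon$ has exactly one fixed point, and then to read off the asserted regularity from Lemma~\ref{AP_lem}.

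First I would combine the two estimates of Lemma~\ref{DP_lem_1} into a single pointwise-in-time bound for the $X$-norm. Keeping the notation of that lemma, \eqref{DP_esti1} yields $|\zeta_\varepsilon(t)|_H^2\le 2C_3\int_0^t|\widetilde\zeta(\tau)|_X^2\,d\tau$ and \eqref{DP_esti2} yields $|\partial_x\zeta_\varepsilon(t)|_H^2\le (2C_4/\delta_\sigma)\int_0^t|\widetilde\zeta(\tau)|_X^2\,d\tau$; adding them gives a constant $C_5>0$ (depending on $\varepsilon$ but not on $\widetilde\zeta_1,\widetilde\zeta_2$) such that
\[
|\Gamma_\varepsilon(\widetilde\zeta_1)(t)-\Gamma_\varepsilon(\widetilde\zeta_2)(t)|_X^2\le C_5\int_0^t|\widetilde\zeta_1(\tau)-\widetilde\zeta_2(\tau)|_X^2\,d\tau\qquad\text{for every }t\in[0,T],
\]
where I have used that, by linearity of \eqref{DP_eq_aux}, the difference $\Gamma_\varepsilon(\widetilde\zeta_1)-\Gamma_\varepsilon(\widetilde\zeta_2)$ solves the auxiliary problem with source $\xi$ replaced by $0$ and with $\widetilde\zeta$ replaced by $\widetilde\zeta_1-\widetilde\zeta_2$.

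Next I would turn this Lipschitz-type bound into a contraction by equipping $L^2(0,T;X)$ with the weighted (Bielecki) norm $\|w\|_\lambda^2:=\int_0^T e^{-\lambda t}|w(t)|_X^2\,dt$, which for each fixed $\lambda>0$ is equivalent to the standard norm. Plugging the displayed inequality in and interchanging the order of integration gives $\|\Gamma_\varepsilon(\widetilde\zeta_1)-\Gamma_\varepsilon(\widetilde\zeta_2)\|_\lambda^2\le(C_5/\lambda)\,\|\widetilde\zeta_1-\widetilde\zeta_2\|_\lambda^2$, so choosing $\lambda>C_5$ makes $\Gamma_\varepsilon$ a strict contraction on the Banach space $(L^2(0,T;X),\|\cdot\|_\lambda)$. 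Banach's fixed point theorem then provides a unique $\zeta_\varepsilon\in L^2(0,T;X)$ with $\Gamma_\varepsilon(\zeta_\varepsilon)=\zeta_\varepsilon$; this $\zeta_\varepsilon$ is a strong solution of (DP)($\varepsilon, \xi$), and since $\zeta_\varepsilon=\Gamma_\varepsilon(\zeta_\varepsilon)$ with $\zeta_\varepsilon\in L^2(0,T;X)$, Lemma~\ref{AP_lem} shows $\zeta_\varepsilon\in W^{1,2}(0,T;H)\cap L^\infty(0,T;X)\cap L^2(0,T;H^2(0,1))$. For uniqueness, any strong solution of (DP)($\varepsilon, \xi$) belongs to $L^2(0,T;X)$ and is a fixed point of $\Gamma_\varepsilon$, hence equals $\zeta_\varepsilon$.

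The one step that is not completely routine is the combination in the second paragraph: neither estimate of Lemma~\ref{DP_lem_1} alone closes the argument, since \eqref{DP_esti1} controls $\partial_x\zeta_\varepsilon$ only in $L^2$ in time while the right-hand side of \eqref{DP_eq_aux} contains $\partial_x\widetilde\zeta$, and \eqref{DP_esti2} by itself produces a Gronwall-type factor rather than a small constant. Using both estimates simultaneously and then absorbing the resulting constant into an exponentially weighted norm — rather than restricting to a short time interval and iterating — is what makes the fixed-point scheme close in one shot; everything else is bookkeeping.
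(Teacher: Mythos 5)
Your proof is correct and follows essentially the same route as the paper: reduce (DP)($\varepsilon,\xi$) to a fixed point of $\Gamma_\varepsilon$, feed the two estimates of Lemma~\ref{DP_lem_1} into a Volterra-type bound $|\Gamma_\varepsilon(\widetilde\zeta_1)(t)-\Gamma_\varepsilon(\widetilde\zeta_2)(t)|_X^2\le C_5\int_0^t|\widetilde\zeta_1-\widetilde\zeta_2|_X^2\,d\tau$ with $C_5=2C_3+2C_4/\delta_\sigma$, apply Banach's fixed point theorem, and read the regularity from Lemma~\ref{AP_lem} and uniqueness from the same estimate. The only difference is cosmetic: you turn the bound into a strict contraction via a Bielecki weighted norm, whereas the paper iterates $\Gamma_\varepsilon$ and uses that $(C_5T)^{k}/k!<1$ for some $k$, so that a power of the map is a contraction; both are standard and equally valid.
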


\begin{proof}
Let $\varepsilon > 0$, $\xi \in C^\infty_0 (Q(T))$, and $\widetilde{\zeta}_i \in L^2(0,T; X)$ with $i = 1, 2$.
We put $\widetilde{\zeta} \coloneqq \widetilde{\zeta}_1 - \widetilde{\zeta}_2$ and $\zeta_{\varepsilon} \coloneqq \zeta_{\varepsilon, 1} - \zeta_{\varepsilon, 2}$, where $\zeta_{\varepsilon, i} \coloneqq \Gamma_\varepsilon(\widetilde{\zeta}_i)$ for $i = 1, 2$.
Due to \eqref{DP_esti1} and \eqref{DP_esti2} in Lemma \ref{DP_lem_1},  we have
\begin{align}
\int^{t_1}_0 |(\Gamma_{\varepsilon} (\widetilde{\zeta}_{1})(t) - \Gamma_{\varepsilon} (\widetilde{\zeta}_{2})(t)|_X^2 dt
&= \int^{t_1}_0 |\zeta_\varepsilon(t)|_X^2 dt \nonumber\\
\label{DP_esti3-1}
&\leq \int^{t_1}_0 C_5 \int^t_0 |\widetilde{\zeta}(\tau)|_X^2 d\tau dt \\
\label{DP_esti3-2}
&\leq C_5 t_1 \int^{t_1}_0 |\widetilde{\zeta}(t)|_X^2 dt
\end{align}
for any $t_1 \in [0, T]$, where $C_5 \coloneqq 2C_3 + 2 C_4/\delta_\sigma$.
Taking into account the fact that $\Gamma_{\varepsilon} (\widetilde{\zeta}_i) \in L^2(0,T; X)$, we define $\Gamma^2_{\varepsilon} (\widetilde{\zeta}_i) \coloneqq \Gamma_{\varepsilon} (\Gamma_{\varepsilon} (\widetilde{\zeta}_i))$ for $i = 1, 2$. By using \eqref{DP_esti3-1} and \eqref{DP_esti3-2}, we obtain
\begin{align}
\int^{t_1}_0 |(\Gamma^2_{\varepsilon} (\widetilde{\zeta}_{1})(t) - \Gamma^2_{\varepsilon}(\widetilde{\zeta}_{2})(t)|_X^2 dt
&= \int^{t_1}_0 |(\Gamma_{\varepsilon}(\zeta_{\varepsilon, 1})(t) - \Gamma_{\varepsilon}(\zeta_{\varepsilon, 2})(t)|_X^2 dt \nonumber\\
&\leq \int^{t_1}_0 C_5 \int^{t}_0 |\zeta_\varepsilon(\tau)|_X^2 d\tau dt \nonumber\\
&\leq C_5 \int^{t_1}_0 (C_5 t) \int^t_0 |\widetilde{\zeta}(\tau)|_X^2 d\tau dt \nonumber\\
\label{DP_esti4} 
&\leq \frac{C_5^2 T^2}{2} \int^{t_1}_0 |\widetilde{\zeta}(t)|_X^2 dt
\end{align}
for any $t_1 \in [0, T]$.
We next assume that the following estimate holds for some $k \in \mathbb{N}$:
\begin{align}
\label{DP_esti6}
\int^{t_1}_0 |(\Gamma^{k}(\widetilde{\zeta}_{1})(t) - \Gamma^{k}(\widetilde{\zeta}_{2})(t)|_X^2 dt
\leq \frac{C_5^{k} T^{k}}{k!} \int^{t_1}_0 |\widetilde{\zeta}(t)|_X^2 dt
\qquad \text{for any $t_1 \in [0, T]$}.
\end{align}
In the same way as in the proof of \eqref{DP_esti4}, we can derive
\begin{align*}
\int^{t_1}_0 |(\Gamma^{k+1}(\widetilde{\zeta}_{1})(t) - \Gamma^{k+1}(\widetilde{\zeta}_{2})(t)|_X^2 dt
\leq \frac{C_5^{k+1} T^{k+1}}{(k+1)!} \int^{t_1}_0 |\widetilde{\zeta}(t)|_X^2 dt
\qquad \text{for any $t_1 \in [0, T]$}.
\end{align*}
Therefore, it follows from the induction argument that \eqref{DP_esti6} holds for all $k \in \mathbb{N}$.
Since $(C_5T)^{\widehat{k}}/\widehat{k}! < 1$ for some $\widehat{k} \in \mathbb{N}$, the Banach fixed point theorem implies that there exists $\zeta_{\varepsilon} \in L^2(0, T; H^1(0,1))$ such that $\Gamma_\varepsilon (\zeta_{\varepsilon}) = \zeta_{\varepsilon}$, which is a strong solution to (DP)($\varepsilon, \xi$).

The uniqueness of strong solutions to (DP)($\varepsilon, \xi$) is a direct consequence of \eqref{DP_esti3-2}.
\end{proof}

In Lemmas \ref{DP_lem2} and \ref{DP_lem3} below, we give uniform estimates of $\zeta_\varepsilon$ with respect to $\varepsilon$. 

\begin{lemma} \label{DP_lem2}
Let $\varepsilon > 0$ and let $\xi \in C_0^\infty (Q(T))$.
Let $\zeta_\varepsilon$ be the strong solution of {\rm (DP)($\varepsilon, \zeta$)} given in Proposition \ref{DPep_ss}.
Then, there exists a positive constant $M > 0$ independent of $\varepsilon$ such that
\begin{align}
\label{zeta_esti}|\zeta_\varepsilon(t, x)| \leq Mt 
\qquad \text{for $(t, x) \in Q(T)$}.
\end{align}
\end{lemma}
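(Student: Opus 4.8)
The plan is to prove \eqref{zeta_esti} by a maximum-principle argument (Stampacchia truncation), comparing $\zeta_\varepsilon$ with the affine-in-time barriers $\pm M t$, where $M := |\xi|_{L^\infty(Q(T))}$. Since $\xi \in C_0^\infty(Q(T))$, this constant is finite and, crucially, independent of $\varepsilon$.

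First I would set $w := \zeta_\varepsilon - M t$. By \eqref{DP_eq}--\eqref{DP_IC}, $w$ satisfies $\partial_t w - \sigma_\varepsilon \partial_x^2 w - \varphi_\varepsilon \partial_x w = \xi - M$ in $Q(T)$ together with $\partial_x w = 0$ at $x = 0, 1$ and $w(0, \cdot) = 0$, and the choice of $M$ guarantees $\xi - M \leq 0$ a.e.\ on $Q(T)$. Because $\zeta_\varepsilon \in W^{1,2}(0,T; H) \cap L^\infty(0,T; X) \cap L^2(0,T; H^2(0,1))$ by Proposition \ref{DPep_ss}, the positive part $w^+ := \max\{w, 0\}$ lies in $W^{1,2}(0,T; H) \cap L^2(0,T; X)$, with $\partial_x w^+$ equal to $\partial_x w$ on $\{w > 0\}$ and vanishing elsewhere; hence $w^+$ is an admissible test function. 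I would then test the equation for $w$ against $w^+$ and integrate over $(0, 1)$: the time term yields $\frac{1}{2}\frac{d}{dt}|w^+|_H^2$; integrating the diffusion term by parts (the boundary contributions vanish because $\partial_x w = 0$ at $x = 0, 1$) and using the identities $(\partial_x w)(\partial_x w^+) = |\partial_x w^+|^2$ and $w^+ \partial_x w = w^+ \partial_x w^+$ a.e.\ produces $\int_0^1 \sigma_\varepsilon |\partial_x w^+|^2\, dx + \int_0^1 (\partial_x \sigma_\varepsilon) w^+ \partial_x w^+\, dx$, while the convection term becomes $-\int_0^1 \varphi_\varepsilon w^+ \partial_x w^+\, dx$.

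Using $\sigma_\varepsilon \geq \delta_\sigma$ from \eqref{sigma_esti} and Young's inequality to absorb the two terms containing $\partial_x w^+$ into the good term (with constants that may depend on $|\partial_x \sigma_\varepsilon|_{L^\infty(Q(T))}$ and $|\varphi_\varepsilon|_{L^\infty(Q(T))}$), together with $\int_0^1 (\xi - M) w^+\, dx \leq 0$, I would obtain a differential inequality of the form $\frac{d}{dt}|w^+(t)|_H^2 \leq C |w^+(t)|_H^2$ a.e.\ on $[0, T]$. Since $w^+(0, \cdot) = 0$, Gronwall's inequality forces $|w^+(t)|_H = 0$ for all $t \in [0, T]$; note that the precise value of $C$ (and whatever $\varepsilon$-dependence it carries) is irrelevant, as the vanishing initial value already yields the conclusion. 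Hence $\zeta_\varepsilon \leq M t$ a.e.\ on $Q(T)$, and applying the same argument to $-\zeta_\varepsilon - M t$ gives $\zeta_\varepsilon \geq -M t$. Finally, since $W^{1,2}(0,T; H) \cap L^2(0,T; H^2(0,1)) \hookrightarrow C([0,T]; X) \hookrightarrow C(\overline{Q(T)})$, these inequalities hold pointwise, which is \eqref{zeta_esti} with $M$ independent of $\varepsilon$. The only delicate point is the rigorous justification that $w^+$ is an admissible test function and of the chain-rule identities for $\partial_x w^+$ and $\frac{d}{dt}|w^+|_H^2$ in the stated regularity class; this is a routine Stampacchia-type argument, so I anticipate only bookkeeping rather than a genuine obstacle.
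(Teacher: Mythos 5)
Your proposal is correct and follows essentially the same route as the paper: testing the equation with the truncation $(\zeta_\varepsilon - Mt)^+$ (and then with $(-\zeta_\varepsilon - Mt)^+$), absorbing the $\partial_x\sigma_\varepsilon$ and $\varphi_\varepsilon$ terms via Young's inequality, and concluding by Gronwall from the zero initial value, so the $\varepsilon$-dependence of the Gronwall constant is harmless. The only cosmetic differences are your choice $M = |\xi|_{L^\infty(Q(T))}$ versus the paper's strict $M > \max_{\overline{Q(T)}}|\xi|$, and your explicit continuity argument upgrading the a.e.\ bound to a pointwise one.
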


\begin{proof}
Let $( \cdot )^+$ denote the positive part and let $M > \max_{\overline{Q(T)}} |\xi|$.
Multiplying \eqref{DP_eq} by $(\zeta_\varepsilon(t) - Mt)^+$ yields
\begin{align*}
\int^1_0 \xi(t) (\zeta_\varepsilon(t) - Mt)^+ dx
&= \int^1_0 (\zeta_\varepsilon(t) - Mt)^+ \partial_t \zeta_{\varepsilon}(t) dx 
-\int^1_0 \sigma_\varepsilon \partial_x^2 \zeta_{\varepsilon}(t) (\zeta_\varepsilon(t) - Mt)^+ dx \\
&\hspace{10mm} - \int^1_0 \varphi_\varepsilon(t) \partial_x \zeta_{\varepsilon}(t) (\zeta_\varepsilon(t) - Mt)^+ dx\\
&\eqqcolon I_1(t) + I_2(t) + I_3(t)
\end{align*} 
for a.e. $t \in [0, T]$.
By a simple calculation, we get
\begin{align*}
I_1(t)
= \frac{1}{2} \frac{d}{dt} |(\zeta_\varepsilon(t) - Mt)^+|_H^2 
+ \int^1_0 M(\zeta_\varepsilon(t) - Mt)^+ dx
\qquad \text{for a.e. $t \in [0, T]$}.
\end{align*}
Applying integration by parts to $I_2$, together with \eqref{sigma_esti}, we have
\begin{align*}
&I_2(t)
\geq \delta_\sigma |\partial_x(\zeta_\varepsilon(t) - Mt)^+|_H^2
+ \int^1_0 (\zeta_\varepsilon(t) - Mt)^+ \partial_x(\zeta_\varepsilon(t) - Mt)^+ \partial_x\sigma_\varepsilon(t) dx\\
&\hspace{110mm} \text{for a.e. $t \in [0, T]$}.
\end{align*}
It follows from Young's inequality that
\begin{align*}
I_3(t)
&\geq -|\varphi_\varepsilon|_{L^\infty(Q(T))} \int^1_0 |\partial_x \zeta_\varepsilon(t)| |(\zeta_\varepsilon(t) - Mt)^+| dx\\
&\geq -|\varphi_\varepsilon|_{L^\infty(Q(T))} \int^1_0 |\partial_x (\zeta_\varepsilon(t) - Mt)^+| |(\zeta_\varepsilon(t) - Mt)^+| dx\\
&\geq -\frac{\delta_\sigma}{2} |\partial_x(\zeta_\varepsilon(t) - Mt)^+|_H^2
- \frac{1}{2\delta_\sigma} |\varphi_\varepsilon|_{L^\infty(Q(T))}^2 |(\zeta_\varepsilon(t) - Mt)^+|_H^2
\end{align*}
for a.e. $t \in [0, T]$.
Since $M > \max_{\overline{Q(T)}} |\xi|$, combining these estimates gives
\begin{align*}
&\frac{1}{2} \frac{d}{dt} |(\zeta_\varepsilon(t) - Mt)^+|_H^2 
+ \frac{\delta_\sigma}{2} |\partial_x(\zeta_\varepsilon(t) - Mt)^+|_H^2\\
&\leq
- \int^1_0 (\zeta_\varepsilon(t) - Mt)^+ \partial_x(\zeta_\varepsilon(t) - Mt)^+ \partial_x\sigma_\varepsilon(t) dx
+ \int^1_0 (\xi(t) - M)(\zeta_\varepsilon(t) - Mt)^+ dx\\
&\hspace{10mm} +\frac{1}{2 \delta_\sigma} |\varphi_\varepsilon|_{L^\infty(Q(T))}^2 |(\zeta_\varepsilon(t) - Mt)^+|_H^2\\
&\leq \widehat{C}_{\sigma} \int^1_0 |(\zeta_\varepsilon(t) - Mt)^+| | \partial_x(\zeta_\varepsilon(t) - Mt)^+| dx
 +\frac{1}{2 \delta_\sigma} |\varphi_\varepsilon|_{L^\infty(Q(T))}^2 |(\zeta_\varepsilon(t) - Mt)^+|_H^2
\end{align*}
for a.e. $t \in [0, T]$.
By setting $C_{6} \coloneqq \widehat{C}_{\sigma}^2/\delta_\sigma + C_q/(2 \delta_\sigma)$ and using Young's inequality, we obtain
\begin{align*}
&\frac{1}{2} \frac{d}{dt} |(\zeta_\varepsilon(t) - Mt)^+|_H^2 
+ \frac{\delta_\sigma}{4} | \partial_x(\zeta_\varepsilon(t) - Mt)^+|_H^2 \\
&\hspace{10mm} \leq C_{6} |(\zeta_\varepsilon(t) - Mt)^+|_H^2
\qquad \text{for a.e. $t \in [0, T]$}.
\end{align*}
From Gronwall's inequality and \eqref{DP_IC}, we conclude that $|(\zeta_\varepsilon(t) - Mt)^+|_H^2 = 0$ for any $t \in [0, T]$, which implies
\begin{align*}
\zeta_\varepsilon(t, x) \leq Mt
\qquad \text{for $(t, x) \in Q(T)$}.
\end{align*}
Similarly, multiplying \eqref{DP_eq_aux} by $(-\zeta_\varepsilon - Mt)^+$ instead of $(\zeta_\varepsilon - Mt)^+$, we can derive
\begin{align*}
-\zeta_\varepsilon(t, x) \leq Mt
\qquad \text{for $(t, x) \in Q(T)$}.
\end{align*}
\end{proof}

\begin{lemma} \label{DP_lem3}
Under the same assumption as in Lemma \ref{DP_lem2}, there exists a positive constant $C_8$ independent of $\varepsilon$ such that the following estimate holds:
\begin{align*}
\frac{1}{2} |\partial_x \zeta_{\varepsilon}(t_1)|_H^2
+ \frac{\delta_\sigma}{4} \int^T_0 |\partial_x^2 \zeta_{\varepsilon}(t)|_H^2 dt
\leq C_8
\qquad \text{for $0 \leq t_1 \leq T$}.
\end{align*}
\end{lemma}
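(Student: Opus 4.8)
The plan is to obtain the estimate by multiplying the dual equation \eqref{DP_eq} by $-\partial_x^2 \zeta_\varepsilon$ and integrating over $(0,1)$, which is the natural energy estimate that produces $\frac{d}{dt}|\partial_x\zeta_\varepsilon|_H^2$ together with the parabolic gain $\int_0^1 \sigma_\varepsilon |\partial_x^2\zeta_\varepsilon|^2\,dx$. The key point — and the reason this lemma is stated separately from Lemma \ref{DP_lem_1} — is that all resulting constants must be uniform in $\varepsilon$, so the bad terms $\widehat{C}_\sigma$ and $\widehat{C}_\sigma'$, which blow up as $\varepsilon \to 0$, are no longer allowed to appear. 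The device that removes them is the pointwise bound $|\zeta_\varepsilon(t,x)| \le Mt \le MT$ from Lemma \ref{DP_lem2}: wherever the earlier argument integrated by parts and landed a derivative on $\sigma_\varepsilon$, we instead keep the derivative on $\zeta_\varepsilon$ and use the $L^\infty$ bound on $\zeta_\varepsilon$ itself. Concretely, since $\partial_x(\sigma_\varepsilon \partial_x^2\zeta_\varepsilon) = \sigma_\varepsilon \partial_x^3\zeta_\varepsilon + (\partial_x\sigma_\varepsilon)\partial_x^2\zeta_\varepsilon$ is awkward, the cleaner route is to test \eqref{DP_eq} by $-\partial_x^2\zeta_\varepsilon$ directly; then the term $\int_0^1 \sigma_\varepsilon \partial_x^2\zeta_\varepsilon \cdot \partial_x^2\zeta_\varepsilon\,dx = \int_0^1 \sigma_\varepsilon|\partial_x^2\zeta_\varepsilon|^2\,dx \ge \delta_\sigma|\partial_x^2\zeta_\varepsilon|_H^2$ is the good term, and no derivative of $\sigma_\varepsilon$ is generated at all.

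First I would justify the integration by parts $-\int_0^1 \partial_t\zeta_\varepsilon \cdot \partial_x^2\zeta_\varepsilon\,dx = \frac{1}{2}\frac{d}{dt}|\partial_x\zeta_\varepsilon|_H^2$, using the boundary condition \eqref{DP_BC} $\partial_x\zeta_\varepsilon = 0$ at $x = 0, 1$, which kills the boundary term $[\partial_t\zeta_\varepsilon\,\partial_x\zeta_\varepsilon]_0^1$; the regularity $\zeta_\varepsilon \in W^{1,2}(0,T;H)\cap L^2(0,T;H^2(0,1))$ from Proposition \ref{DPep_ss} makes this rigorous after a standard density/mollification argument in time. This gives
\begin{align*}
\frac{1}{2}\frac{d}{dt}|\partial_x\zeta_\varepsilon(t)|_H^2 + \int_0^1 \sigma_\varepsilon|\partial_x^2\zeta_\varepsilon(t)|^2\,dx
= -\int_0^1 \varphi_\varepsilon(t)\,\partial_x\zeta_\varepsilon(t)\,\partial_x^2\zeta_\varepsilon(t)\,dx - \int_0^1 \xi(t)\,\partial_x^2\zeta_\varepsilon(t)\,dx
\end{align*}
for a.e. $t \in [0,T]$. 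The second term on the right is bounded by $\frac{\delta_\sigma}{8}|\partial_x^2\zeta_\varepsilon|_H^2 + \frac{2}{\delta_\sigma}|\xi|_{L^\infty(Q(T))}^2$, absorbing the derivative term and leaving a constant. For the $\varphi_\varepsilon$ term I would use Young's inequality: $|\int_0^1 \varphi_\varepsilon\,\partial_x\zeta_\varepsilon\,\partial_x^2\zeta_\varepsilon\,dx| \le \frac{\delta_\sigma}{8}|\partial_x^2\zeta_\varepsilon|_H^2 + \frac{2}{\delta_\sigma}\int_0^1 |\varphi_\varepsilon|^2|\partial_x\zeta_\varepsilon|^2\,dx$.

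The main obstacle is controlling $\int_0^1|\varphi_\varepsilon|^2|\partial_x\zeta_\varepsilon|^2\,dx$ uniformly in $\varepsilon$, because $\varphi_\varepsilon$ is only in $L^4(0,T;H)$ — not bounded in $L^\infty(Q(T))$ uniformly in $\varepsilon$ — so the crude estimate $|\varphi_\varepsilon|_{L^\infty}^2$ used in Lemma \ref{DP_lem_1} is now forbidden. I would handle this with the Gagliardo–Nirenberg-type interpolation of Lemma \ref{lem:1DGNineq}: write $\int_0^1 |\varphi_\varepsilon|^2|\partial_x\zeta_\varepsilon|^2\,dx \le |\varphi_\varepsilon(t)|_H^2\,|\partial_x\zeta_\varepsilon(t)|_{L^\infty(0,1)}^2$, and then, since $\partial_x\zeta_\varepsilon(t) \in X$ with vanishing trace at the endpoints, $|\partial_x\zeta_\varepsilon(t)|_{L^\infty(0,1)}^2 \le 2|\partial_x\zeta_\varepsilon(t)|_H|\partial_x^2\zeta_\varepsilon(t)|_H$ (the boundary term in \eqref{1DGNiineq_Linf} drops because of $\partial_x\zeta_\varepsilon = 0$ at $x = 0,1$; alternatively just use the stated inequality and keep the extra $|\partial_x\zeta_\varepsilon|_H^2$ term). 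Applying Young's inequality once more, $|\varphi_\varepsilon|_H^2\,|\partial_x\zeta_\varepsilon|_H\,|\partial_x^2\zeta_\varepsilon|_H \le \frac{\delta_\sigma}{8}|\partial_x^2\zeta_\varepsilon|_H^2 + C|\varphi_\varepsilon|_H^4\,|\partial_x\zeta_\varepsilon|_H^2$, so that after absorbing all $|\partial_x^2\zeta_\varepsilon|_H^2$ contributions into the left side one arrives at
\begin{align*}
\frac{1}{2}\frac{d}{dt}|\partial_x\zeta_\varepsilon(t)|_H^2 + \frac{\delta_\sigma}{4}|\partial_x^2\zeta_\varepsilon(t)|_H^2
\le C\bigl(1 + |\varphi_\varepsilon(t)|_H^4\bigr)|\partial_x\zeta_\varepsilon(t)|_H^2 + C
\qquad \text{a.e. } t \in [0,T],
\end{align*}
where $C$ depends only on $\delta_\sigma$, $C_q$, $|\xi|_{L^\infty(Q(T))}$, and $|p|_{L^4(0,T;H)}$ via \eqref{varphi_esti}, but not on $\varepsilon$. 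Since $t \mapsto 1 + |\varphi_\varepsilon(t)|_H^4 \in L^1(0,T)$ with norm bounded uniformly in $\varepsilon$ by \eqref{varphi_esti}, and $|\partial_x\zeta_\varepsilon(0)|_H = 0$ by \eqref{DP_IC}, Gronwall's inequality applied on $[0,t_1]$ yields $|\partial_x\zeta_\varepsilon(t_1)|_H^2 \le C\exp(C\|1 + |\varphi_\varepsilon|_H^4\|_{L^1(0,T)})$, a bound uniform in $\varepsilon$, and then integrating the differential inequality over $[0,T]$ gives the $L^2$ bound on $\partial_x^2\zeta_\varepsilon$; together these produce the constant $C_8$ and complete the proof.
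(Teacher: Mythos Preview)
Your argument is correct and follows the same skeleton as the paper: test \eqref{DP_eq} by $-\partial_x^2\zeta_\varepsilon$, keep the coercive term $\int_0^1\sigma_\varepsilon|\partial_x^2\zeta_\varepsilon|^2\,dx \ge \delta_\sigma|\partial_x^2\zeta_\varepsilon|_H^2$ without any integration by parts (so no $\partial_x\sigma_\varepsilon$ appears), control $\int_0^1|\varphi_\varepsilon|^2|\partial_x\zeta_\varepsilon|^2\,dx$ via \eqref{1DGNiineq_Linf} applied to $\partial_x\zeta_\varepsilon$ followed by Young, and close with Gronwall using the $\varepsilon$-uniform bound \eqref{varphi_esti}. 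The one substantive difference is your handling of the source term $-\int_0^1\xi\,\partial_x^2\zeta_\varepsilon\,dx$: you estimate it directly by Cauchy--Schwarz and Young, absorbing $\tfrac{\delta_\sigma}{8}|\partial_x^2\zeta_\varepsilon|_H^2$ and leaving the $\varepsilon$-independent constant $\tfrac{2}{\delta_\sigma}|\xi|_{L^\infty(Q(T))}^2$. The paper instead integrates by parts twice to get $-\int_0^1(\partial_x^2\xi)\zeta_\varepsilon\,dx$ and then invokes the pointwise bound $|\zeta_\varepsilon|\le MT$ from Lemma~\ref{DP_lem2}; that is in fact the \emph{only} place Lemma~\ref{DP_lem2} enters the paper. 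Your route is more elementary and shows that Lemma~\ref{DP_lem2} is not actually needed here --- which also means that your opening paragraph, where you announce the $L^\infty$ bound from Lemma~\ref{DP_lem2} as ``the device'' that makes the argument go through, is misplaced: you never use it.
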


\begin{proof}
Multiplying \eqref{DP_eq} by $\partial_x^2 \zeta_{\varepsilon}$ gives
\begin{align*}
-\int^1_0 \partial_t \zeta_{\varepsilon} \partial_x^2 \zeta_{\varepsilon} dx 
+\int^1_0 \sigma_\varepsilon (\partial_x^2 \zeta_{\varepsilon})^2 dx 
= - \int^1_0 \xi\partial_x^2\zeta_{\varepsilon} dx
- \int^1_0 \varphi_\varepsilon \partial_x \zeta_{\varepsilon} \partial_x^2 \zeta_{\varepsilon} dx 
\qquad \text{a.e. on $[0, T]$}.
\end{align*}
By integration by parts and \eqref{zeta_esti}, we obtain
\begin{align*}
-\int^1_0 \partial_t \zeta_{\varepsilon}\partial_x^2\zeta_{\varepsilon} dx
&= \frac{1}{2} \frac{d}{dt} |\partial_x \zeta_{\varepsilon}|_H^2,\\
- \int^1_0 \xi \partial_x^2 \zeta_{\varepsilon} dx
&= -\int^1_0 (\partial_x^2 \xi) \zeta_{\varepsilon} dx\\
&\leq MT \int^1_0 |\partial_x^2 \xi| dx
\end{align*}
a.e. on $[0, T]$.
Taking into account the fact that $\xi \in C_0^\infty(Q(T))$, we put
\begin{align*}
C_7 \coloneqq MT \sup_{0 \leq t \leq T}\int^1_0 |\partial_x^2 \xi(t)| dx.
\end{align*}
By using \eqref{sigma_esti}, we have
\begin{align*}
\int^1_0 \sigma_\varepsilon |\partial_x^2 \zeta_{\varepsilon}|^2 dx
\geq \delta_\sigma |\partial_x^2 \zeta_{\varepsilon}|_H^2
\qquad \text{a.e. on $[0, T]$}.
\end{align*}
Furthermore, it follows from \eqref{1DGNiineq_Linf} and Young's inequality that
\begin{align*}
- \int^1_0 \varphi_\varepsilon \partial_x \zeta_{\varepsilon}\partial_x^2 \zeta_{\varepsilon} dx
&\leq \frac{\delta_\sigma}{2} |\partial_x^2 \zeta_{\varepsilon}|_H^2
+ \frac{1}{2\delta_\sigma} \int^1_0 |\varphi_\varepsilon|^2 |\partial_x \zeta_{\varepsilon}|^2 dx\\
&\leq \frac{\delta_\sigma}{2} |\partial_x^2 \zeta_{\varepsilon}|_H^2
+ \frac{1}{2\delta_\sigma} (|\partial_x \zeta_{\varepsilon}|_H^2 + 2|\partial_x \zeta_{\varepsilon}|_H|\partial_x^2 \zeta_{\varepsilon}|_H)|\varphi_\varepsilon|_H^2\\
&\leq \frac{3}{4} \delta_\sigma |\partial_x^2 \zeta_{\varepsilon}|_H^2
+ \frac{1}{2 \delta_\sigma} |\partial_x \zeta_{\varepsilon}|_H^2 |\varphi_\varepsilon|_H^2
+ \frac{1}{\delta_\sigma} |\partial_x \zeta_{\varepsilon}|_H^2 |\varphi_\varepsilon|_H^4
\end{align*}
a.e. on $[0, T]$.
Thus, we get
\begin{align*}
\frac{1}{2} \frac{d}{dt} |\partial_x \zeta_{\varepsilon}|_H^2
+ \frac{\delta_\sigma}{4} |\partial_x^2 \zeta_{\varepsilon}|_H^2
\leq \frac{1}{2 \delta_\sigma} |\partial_x \zeta_{\varepsilon}|_H^2 |\varphi_\varepsilon|_H^2
+ \frac{1}{\delta_\sigma} |\partial_x \zeta_{\varepsilon}|_H^2 |\varphi_\varepsilon|_H^4
+ C_7
\qquad \text{a.e. on $[0, T]$}.
\end{align*}
By using Gronwall's inequality and \eqref{varphi_esti}, we obtain
\begin{align}
&\frac{1}{2} |\partial_x \zeta_{\varepsilon}(t_1)|_H^2
+ \frac{\delta_\sigma}{4} \int^{t_1}_0 |\partial_x^2 \zeta_{\varepsilon}(t)|_H^2 dt \nonumber\\
&\leq C_7t_1 \exp \bigg( \frac{1}{2\delta_\sigma} \int^T_0 |\varphi_\varepsilon(t)|_H^2 dt
+ \frac{1}{\delta_\sigma} \int^T_0 |\varphi_\varepsilon(t)|_H^4 dt \bigg)  \nonumber\\
&\label{DP_esti7}\leq C_7T \exp \bigg( \frac{1}{2\delta_\sigma} C_q^2 \int^T_0 |p|_H^2 dt
+ \frac{C_q^4}{\delta_\sigma} \int^T_0 |p|_H^4 dt \bigg)\\
&\leq C_7T \exp \bigg( \frac{C_q^2}{4\delta_\sigma}(|p|_{L^4(0,T; H)}^4 + 1) 
+ \frac{C_q^4}{\delta_\sigma} |p|_{L^4(0,T; H)}^4 \bigg)  \nonumber \\
& \eqqcolon C_8 \nonumber
\end{align}
for $0 \leq t_1 \leq T$.
This completes the proof of Lemma \ref{DP_lem3}.
\end{proof}

\begin{remark}
As seen from \eqref{DP_esti7}, the assumption $p \in L^4(0, T; H)$ in Theorem \ref{main_thm} is required for the uniqueness of weak solutions to (P)($p, v_0$).
\end{remark}

\begin{proof}[Proof of Threorem \ref{main_thm}]
Let $v_1$ and $v_2$ be weak solutions of (P)($p, v_0$).
We set $z_1 \coloneqq h(v_1)$, $z_2 \coloneqq h(v_2)$, and $z \coloneqq z_1 - z_2$.
In addition, for $\varepsilon > 0$ and $\xi \in C_0^\infty(Q(T))$, let $\zeta_\varepsilon$ be the strong solutions of (DP)$(\varepsilon, \xi)$ given in Proposition \ref{DPep_ss}.
By \eqref{B2_diff} and \eqref{DP_eq}, we have
\begin{align*}
\bigg| \int^T_0 \int^1_0 z \xi dx dt \bigg|
= \bigg| \int^T_0 \int^1_0 z((\sigma_\varepsilon - \sigma)\partial_x^2 \zeta_{\varepsilon} + (\varphi_\varepsilon - \varphi)\partial_x \zeta_{\varepsilon}) dx dt \bigg|.
\end{align*}
It follows from \eqref{P_D1} that $z \in L^4(Q(T))$. Thanks to \eqref{1DGNiineq_Linf} and Lemma \ref{DP_lem3}, we observe that $\partial_x \zeta_\varepsilon$ is uniformly bounded in $L^4(Q(T))$ with respect to $\varepsilon$.
Indeed, we obtain
\begin{align*}
|\partial_x \zeta_\varepsilon|_{L^4(Q(T))}^4
&\leq \int^T_0 |\partial_x \zeta_\varepsilon|_{L^\infty(0,1)}^2 \int_0^1 |\partial_x \zeta_\varepsilon|^2 dx dt\\
&\leq \int^T_0 (|\partial_x \zeta_\varepsilon|_H^2 + 2|\partial_x \zeta_\varepsilon|_H|\partial_x^2 \zeta_\varepsilon|_H)|\partial_x \zeta_\varepsilon|_H^2 dt \\
&\leq \int^T_0 (2|\partial_x \zeta_\varepsilon|_H^2 + |\partial_x^2 \zeta_\varepsilon|_H^2)|\partial_x \zeta_\varepsilon|_H^2 dt\\
&\leq 2 C_8^2 T + \frac{4C_8^2}{\delta_\sigma}
\end{align*}
for $\varepsilon > 0$.
Thus, for any $\xi \in C_0^\infty(Q(T))$, we get
\begin{align*}
&\bigg| \int^T_0 \int^1_0 z \xi dx dt \bigg|\\
&\leq \int^T_0 \int^1_0 |z| |\sigma_\varepsilon - \sigma| |\partial_x^2 \zeta_{\varepsilon}| dx dt
+ \int^T_0 \int^1_0 |z| |\varphi_\varepsilon - \varphi| |\partial_x \zeta_{\varepsilon}| dx dt\\
&\leq |z|_{L^4(Q(T))} |\sigma_\varepsilon - \sigma|_{L^4(Q(T))} |\partial_x \zeta_\varepsilon|_{L^2(Q(T))}
+ |z|_{L^4(Q(T))} |\varphi_\varepsilon - \varphi|_{L^2(Q(T))} |\partial_x \zeta_{\varepsilon}|_{L^4(Q(T))}\\
&\to 0 
\end{align*}
as $\varepsilon \to 0$.
This implies $z = 0$, that is, $v_1 = v_2$ on $Q(T)$.
\end{proof}

\section{Convergence of approximate solutions} \label{AS_proof}
In this section, we give the proof of Theorem \ref{AS_conv}.
For this purpose, we suppose the same assumption as in Proposition \ref{P_e} in the rest of this paper.
Let $\delta > 0$ and let $n \in \mathbb{N}$.
We recall that $v_{\delta}^{(n)}$ is the function on $Q(T)$ defined by \eqref{NS}.
In addition, we set
\begin{align*}
v_{0}^{(n)}(x) \coloneqq \sum^n_{i = 1} v_{0, i}^{(n)} \chi_i^{(n)}(x)
\qquad \text{for $x \in [0, 1]$}.
\end{align*}

By simple computations, we obtain
\begin{alignat}{2}
\label{AS_IC_esti}
&|v_{0}^{(n)}|_H \leq |v_0|_H 
&&\qquad \text{for each $n \in \mathbb{N}$}, \\
&v_{0}^{(n)} \to v_0 
&&\qquad \text{in $H$ as $n \to \infty$}. \nonumber
\end{alignat}
Moreover, it follows from (A1) that there exist positive constants $C_9, \ldots ,C_{14}$ such that
\begin{align}
\label{AS_h_esti1}
r^2 &\leq C_9 |h(r)|^2 + C_{10} \leq C_{11} \widehat{h}(h(r)) + C_{12}, \\
\label{AS_h_esti2}
\widehat{h}(r) &\leq C_{13} |r|^2 + C_{14}
\end{align}
for any $r \in \mathbb{R}$, where 
\begin{align*}
\widehat{h}(r) \coloneqq \int^r_0 h^{-1} (\xi) d\xi.
\end{align*}
In fact, if $r \in \mathbb{R}$ satisfies $h(r) \geq 0$, then we have 
\begin{align}
\widehat{h}(h(r))
&= \int^{h(r)}_0 (h^{-1}(\xi) - h^{-1}(0)) d\xi + \int^{h(r)}_0 h^{-1}(0) d\xi \nonumber\\
&\geq \int^{h(r)}_0 \int^\xi_0 (h^{-1})'(y) dy d\xi - \left| \int^{h(r)}_0 h^{-1}(0) d\xi \right| \nonumber\\
&\geq \frac{1}{C_h} \int^{h(r)}_0 \int^\xi_0 dy d\xi - |h^{-1}(0)| |h(r)| \nonumber\\
\label{AS_h_esti3}
&\geq \frac{1}{4C_h} |h(r)|^2 - C_h |h^{-1}(0)|^2.
\end{align}
Similarly, \eqref{AS_h_esti3} can be derived even for $r \in \mathbb{R}$ with $h(r) < 0$.
On the other hand, for any $r > 0$, the mean value theorem implies that there exists $y \in (0, r)$ such that
\begin{align*}
h(r) - h(0) = h'(y) r.
\end{align*}
With the aid of this identity, we get
\begin{align}
\label{AS_h_esti4} r^2 \leq \frac{2}{\delta_h^2} |h(r)|^2 + \frac{2}{\delta_h^2}|h(0)|^2.
\end{align}
In the same way, we can show that the above inequality holds for all $r < 0$.
As a consequence, combining \eqref{AS_h_esti3} with \eqref{AS_h_esti4} yields \eqref{AS_h_esti1}.
We can prove \eqref{AS_h_esti2} by a straightforward computation.

\begin{lemma}\label{AS_lem_esti1}
For $\delta > 0$ and $n \in \mathbb{N}$, let $v_{\delta}^{(n)}$ be the function on $Q(T)$ defined by \eqref{NS} and put $z_{\delta}^{(n)} \coloneqq h(v_{\delta}^{(n)})$.
Then, there exists a positive constant $C_{15}$ independent of $n$ and $\delta$ such that
\begin{alignat}{3}
\label{AS_esti1}
\int^1_0 |v_{\delta}^{(n)}(t)|^2 dx &\leq C_{15},
&\qquad \int^1_0 |z_{\delta}^{(n)}(t)|^2 dx &\leq C_{15}
&&\qquad \text{for $0 \leq t \leq T$},\\
\label{AS_esti4}
\int^T_0 \int^1_0 | \widetilde{v}_{\delta}^{(n)} |^2 dx dt &\leq C_{15}, 
&\qquad \int^T_0 \int^1_0 | \widetilde{z}_{\delta}^{(n)} |^2 dx dt &\leq C_{15},
&&
\end{alignat}
where
\begin{align*}
\widetilde{v}_{\delta}^{(n)} (t, x) &\coloneqq
\begin{dcases}
0, &\qquad 0 \leq x < \Delta x^{(n)},\\
\frac{v_{\delta}^{(n)}(t, x) - v_{\delta}^{(n)}(t, x - \Delta x^{(n)})}{\Delta x^{(n)}}, &\qquad \Delta x^{(n)} \leq x \leq 1,
\end{dcases}\\
\widetilde{z}_{\delta}^{(n)} (t, x) &\coloneqq
\begin{dcases}
0, &\qquad 0 \leq x < \Delta x^{(n)},\\
\frac{z_{\delta}^{(n)}(t, x) - z_{\delta}^{(n)}(t, x - \Delta x^{(n)})}{\Delta x^{(n)}}, &\qquad \Delta x^{(n)} \leq x \leq 1.
\end{dcases}
\end{align*}
\end{lemma}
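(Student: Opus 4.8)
The plan is to obtain the four bounds at once from a single discrete energy estimate, by testing the system \eqref{OP_eq} with the natural discrete test function $h(v_{\delta,i}^{(n)})\,\Delta x^{(n)}$ and summing over $i$ --- the discrete analogue of multiplying $\partial_t h(v)=\partial_x(\partial_x v+b(v)p)$ by $h(v)$.

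First I would rewrite the scheme in conservative form. Setting $F_i^{(n)} := (v_{\delta,i+1}^{(n)}-v_{\delta,i}^{(n)})/\Delta x^{(n)} + b\big((v_{\delta,i}^{(n)}+v_{\delta,i+1}^{(n)})/2\big)\rho_{\delta,i}^{(n)}$ for $i=1,\dots,n-1$ and $F_0^{(n)}:=F_n^{(n)}:=0$ (the latter being the discrete no-flux condition corresponding to \eqref{P_BC}), the system \eqref{OP_eq} reads $h'(v_{\delta,i}^{(n)})\tfrac{d}{dt}v_{\delta,i}^{(n)}=\tfrac{1}{\Delta x^{(n)}}(F_i^{(n)}-F_{i-1}^{(n)})$ uniformly for $i=1,\dots,n$. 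Since $v_{\delta,i}^{(n)}\in C^1(\mathbb{R})$, the left-hand side equals $\tfrac{d}{dt}z_{\delta,i}^{(n)}$ with $z_{\delta,i}^{(n)}:=h(v_{\delta,i}^{(n)})$. Multiplying by $z_{\delta,i}^{(n)}\Delta x^{(n)}$, summing over $i$, and performing a discrete summation by parts (using $F_0^{(n)}=F_n^{(n)}=0$) gives
\[
\frac12\frac{d}{dt}|z_\delta^{(n)}(t)|_H^2 = -\sum_{i=1}^{n-1} F_i^{(n)}\big(h(v_{\delta,i+1}^{(n)})-h(v_{\delta,i}^{(n)})\big) \qquad\text{a.e. on }[0,T].
\]

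Next I would estimate the right-hand side. By the mean value theorem and (A1), $h(v_{\delta,i+1}^{(n)})-h(v_{\delta,i}^{(n)}) = \theta_i\,(v_{\delta,i+1}^{(n)}-v_{\delta,i}^{(n)})$ for some $\theta_i\in[\delta_h,C_h]$, so the diffusive part of the sum is $-\sum_i \theta_i(v_{\delta,i+1}^{(n)}-v_{\delta,i}^{(n)})^2/\Delta x^{(n)} \le -\delta_h\,|\widetilde v_\delta^{(n)}(t)|_H^2$. For the drift part I would use $|b|\le C_b$ on $\mathbb{R}$, $|h(v_{\delta,i+1}^{(n)})-h(v_{\delta,i}^{(n)})|\le C_h|v_{\delta,i+1}^{(n)}-v_{\delta,i}^{(n)}|$, the Cauchy--Schwarz inequality, and Jensen's inequality $\sum_i(\rho_{\delta,i}^{(n)})^2\Delta x^{(n)}\le|\rho_\delta(t,\cdot)|_H^2$, to bound it by $C_bC_h\,|\rho_\delta(t,\cdot)|_H\,|\widetilde v_\delta^{(n)}(t)|_H$; absorbing the last factor by Young's inequality yields
\[
\frac12\frac{d}{dt}|z_\delta^{(n)}(t)|_H^2 + \frac{\delta_h}{2}|\widetilde v_\delta^{(n)}(t)|_H^2 \le \frac{C_b^2C_h^2}{2\delta_h}|\rho_\delta(t,\cdot)|_H^2 \qquad\text{a.e. on }[0,T].
\]
Integrating from $0$ to $t$, and using $\int_0^T|\rho_\delta(t,\cdot)|_H^2\,dt\le|p|_{L^2(Q(T))}^2$ (Young's convolution inequality, since $J_\delta^{(2)}$ has unit mass) together with $|z_\delta^{(n)}(0)|_H^2\le 2|h(0)|^2+2C_h^2|v_0|_H^2$ (from $|h(r)|\le|h(0)|+C_h|r|$ and \eqref{AS_IC_esti}), gives both the $L^\infty$-in-$t$ bound for $z_\delta^{(n)}$ and the $L^2(Q(T))$ bound for $\widetilde v_\delta^{(n)}$, with a constant depending only on the data. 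The bound for $v_\delta^{(n)}$ follows by integrating \eqref{AS_h_esti1} over $(0,1)$, and the bound for $\widetilde z_\delta^{(n)}$ follows from $|\widetilde z_\delta^{(n)}(t)|_H\le C_h|\widetilde v_\delta^{(n)}(t)|_H$, which is again the estimate $|h(v_{\delta,i+1}^{(n)})-h(v_{\delta,i}^{(n)})|\le C_h|v_{\delta,i+1}^{(n)}-v_{\delta,i}^{(n)}|$.

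This is a fairly standard discrete energy argument and I do not expect a serious obstacle. The points needing care are: getting the discrete summation by parts right, with the boundary fluxes $F_0^{(n)}=F_n^{(n)}=0$ correctly accounting for the $i=1$ and $i=n$ equations in \eqref{OP_eq}; and ensuring every constant is genuinely independent of both $n$ and $\delta$, which in the drift term means routing the pressure through $|\rho_\delta|_{L^2}\le|p|_{L^2}$ rather than through any $\delta$-dependent norm of $\rho_\delta$. Note that, unlike the estimates in Section \ref{P_proof_u}, no Gronwall inequality is required here, since the right-hand side of the differential inequality is already integrable in $t$ with a uniform bound.
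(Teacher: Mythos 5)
Your argument is correct, and all the key mechanical points check out: the conservative rewriting with $F_0^{(n)}=F_n^{(n)}=0$ faithfully encodes the three cases of \eqref{OP_eq}, the discrete summation by parts is right, the Cauchy--Schwarz/Jensen step gives $\sum_i(\rho_{\delta,i}^{(n)})^2\Delta x^{(n)}\le|\rho_\delta(t,\cdot)|_H^2$, and routing the pressure through $|\rho_\delta|_{L^2(Q(T))}\le|p|_{L^2(Q(T))}$ keeps every constant independent of $n$ and $\delta$. The difference from the paper is the choice of discrete multiplier. The paper tests \eqref{OP_eq} with $v_{\delta,i}^{(n)}$, so the time term becomes $\frac{d}{dt}\int_0^1\widehat h(z_\delta^{(n)})\,dx$ with $\widehat h(r)=\int_0^r h^{-1}(\xi)\,d\xi$, and the diffusive term is exactly $-|\widetilde v_\delta^{(n)}|_H^2$ with no mean value theorem needed; the price is the pair of comparison inequalities \eqref{AS_h_esti1}--\eqref{AS_h_esti2} to convert the $\widehat h$-functional into the stated $L^2$ bounds on $v_\delta^{(n)}$ and $z_\delta^{(n)}$. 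You instead test with $z_{\delta,i}^{(n)}=h(v_{\delta,i}^{(n)})$, which produces $\frac12\frac{d}{dt}|z_\delta^{(n)}|_H^2$ directly and dispenses with $\widehat h$ and \eqref{AS_h_esti2}, but then coercivity of the diffusive term requires the mean value theorem together with the two-sided bound $\delta_h\le h'\le C_h$ from (A1), yielding the factor $\delta_h$ in front of $|\widetilde v_\delta^{(n)}|_H^2$. Under (A1) the two energies are equivalent, so both routes deliver the same uniform constants; your version is slightly more direct here, while the paper's entropy-type functional is the formulation that would survive a degenerate $h$ (with $h'$ not bounded below), a generality not needed under the present assumptions. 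Your closing remarks are also accurate: no Gronwall argument is really needed (the paper invokes it, but the right-hand side is already a fixed integrable function of $t$), and the $\widetilde z_\delta^{(n)}$ bound is just the Lipschitz estimate $|\widetilde z_\delta^{(n)}|_H\le C_h|\widetilde v_\delta^{(n)}|_H$, which the paper leaves implicit.
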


\begin{proof}
Let $\delta > 0$ and let $n \in \mathbb{N}$.
Let $x_i^{(n)} = i\Delta x^{(n)}$ for $i = 0, 1, 2, \ldots, n$.
From \eqref{OP_eq}, we get
\begin{align}
&\int^1_0 h' (v_{\delta}^{(n)}) (\partial_t v_{\delta}^{(n)}) v_{\delta}^{(n)} dx \nonumber \\
&= \sum^{n-1}_{i = 1}  v_{\delta, i}^{(n)} \left( \frac{v_{\delta, i+1}^{(n)} - v_{\delta, i}^{(n)}}{\Delta x^{(n)}} + b\left(\frac{v_{\delta, i}^{(n)} + v_{\delta, i+1}^{(n)}}{2}\right) \rho_{\delta, i}^{(n)} \right) \nonumber \\
\label{AS_esti8}
&\hspace{10mm} - \sum^n_{i = 2} v_{\delta, i}^{(n)} \left( \frac{v_{\delta, i}^{(n)} - v_{\delta, i-1}^{(n)}}{\Delta x^{(n)}} + b \left( \frac{v_{\delta, i-1}^{(n)} + v_{\delta, i}^{(n)}}{2} \right) \rho_{\delta, i-1}^{(n)} \right)
\qquad \text{a.e. on $[0, T]$}.
\end{align}
For the left-hand side, we have
\begin{align*}
\int^1_0 h' (v_{\delta}^{(n)}) (\partial_t v_{\delta}^{(n)}) v_{\delta}^{(n)} dx 
&=\int^1_0 \partial_t h(v_{\delta}^{(n)}) v_{\delta}^{(n)} dx\\
&= \int^1_0 \widehat{h}' (z_{\delta}^{(n)}) \partial_t z_{\delta}^{(n)} dx\\
&= \frac{d}{dt} \int^1_0 \widehat{h} (z_{\delta}^{(n)}) dx
\end{align*}
a.e. on $[0, T]$.
On the other hand, the right-hand side of \eqref{AS_esti8} is estimated as
\begin{align*}
&\sum^{n-1}_{i = 1}  v_{\delta, i}^{(n)} \left( \frac{v_{\delta, i+1}^{(n)} - v_{\delta, i}^{(n)}}{\Delta x^{(n)}} + b\left(\frac{v_{\delta, i}^{(n)} + v_{\delta, i+1}^{(n)}}{2}\right) \rho_{\delta, i}^{(n)} \right) \\
&\hspace{10mm} - \sum^n_{i = 2} v_{\delta, i}^{(n)} \left( \frac{v_{\delta, i}^{(n)} - v_{\delta, i-1}^{(n)}}{\Delta x^{(n)}} + b \left( \frac{v_{\delta, i-1}^{(n)} + v_{\delta, i}^{(n)}}{2} \right) \rho_{\delta, i-1}^{(n)} \right)\\
&= \sum^n_{i=2} \left( \frac{v_{\delta, i}^{(n)}(t) - v_{\delta, i-1}^{(n)}(t)}{\Delta x^{(n)}} \right) (v_{\delta, i-1}^{(n)}(t) - v_{\delta, i}^{(n)}(t))\\
&\hspace{10mm} + \sum^n_{i=2} b \left( \frac{v_{\delta, i-1}^{(n)}(t) + v_{\delta, i}^{(n)}(t)}{2} \right) \rho_{\delta, i-1}^{(n)}(t) (v_{\delta, i-1}^{(n)}(t) - v_{\delta, i}^{(n)}(t))\\
&= - \sum^n_{i = 2} \int^{x_i^{(n)}}_{x_{i-1}^{(n)}} \left| \frac{v_{\delta, i}^{(n)}(t) - v_{\delta, i-1}^{(n)}(t)}{\Delta x^{(n)}} \right|^2 dx\\ 
&\hspace{10mm} - \sum^n_{i = 2} \int^{x_i^{(n)}}_{x_{i-1}^{(n)}} b \left( \frac{v_{\delta, i-1}^{(n)}(t) + v_{\delta, i}^{(n)}(t)}{2} \right) \rho_{\delta, i-1}^{(n)}(t) \left( \frac{v_{\delta, i}^{(n)}(t) - v_{\delta, i-1}^{(n)}(t)}{\Delta x^{(n)}} \right) dx\\
&\leq - \int^1_{\Delta x^{(n)}} \left| \frac{v_{\delta}^{(n)}(t, x) - v_{\delta}^{(n)}(t, x - \Delta x^{(n)})}{\Delta x^{(n)}} \right|^2 dx\\
&\hspace{10mm} + C_{b} \int^1_{\Delta x^{(n)}} \left| \frac{v_{\delta}^{(n)}(t, x) - v_{\delta}^{(n)}(t, x - \Delta x^{(n)})}{\Delta x^{(n)}} \right| |\rho_{\delta}^{(n)}(t, x-\Delta x^{(n)})| dx
\end{align*} 
for a.e. $t \in [0, T]$, where 
\begin{align*}
\rho_{\delta}^{(n)}(t, x) = \sum^n_{i=1} \chi_i^{(n)}(x) \rho_{\delta, i}^{(n)}(t) 
\qquad \text{for $(t, x) \in Q(T)$}.
\end{align*}
Hence, we obtain
\begin{align*}
&\frac{d}{dt} \int^1_0 \widehat{h} (z_{\delta}^{(n)}(t, x)) dx
+ \frac{1}{2}\int^1_{\Delta x^{(n)}} \left| \frac{v_{\delta}^{(n)}(t, x) - v_{\delta}^{(n)}(t, x - \Delta x^{(n)})}{\Delta x^{(n)}} \right|^2 dx \\
&\leq \frac{C_{b}^2}{2} \int^1_{\Delta x^{(n)}} | \rho_{\delta}^{(n)}(t, x-\Delta x^{(n)})|^2 dx 
\qquad \text{for a.e. $t \in [0, T]$}.
\end{align*}
Gronwall's inequality gives
\begin{align*}
&\int^1_0 \widehat{h} (z_{\delta}^{(n)}(t', x)) dx
+ \frac{1}{2} \int^{t'}_0 \int^1_{\Delta x^{(n)}} \left| \frac{v_{\delta}^{(n)}(t, x) - v_{\delta}^{(n)}(t, x - \Delta x^{(n)})}{\Delta x^{(n)}} \right|^2 dx dt\\
&\leq \frac{C_{b}^2}{2}\int^T_0 \int^1_{\Delta x^{(n)}} | \rho_{\delta}^{(n)}(t, x-\Delta x^{(n)})|^2 dx dt
+ \int^1_0 \widehat{h} (z_{\delta}^{(n)}(0, x)) dx 
\qquad \text{for any $t' \in [0, T]$}.
\end{align*}
It follows from the property of the mollifier and the definition of $\rho_{\delta}^{(n)}$ that
\begin{align}
\int^T_0 \int^1_{\Delta x^{(n)}} | \rho_{\delta}^{(n)}(t, x-\Delta x^{(n)})|^2 dx dt
&= \int^T_0 \sum^{n-1}_{i = 1} \int^{x_i^{(n)}}_{x_{i-1}^{(n)}} | \rho_{\delta, i}^{(n)}(t)|^2 dx dt \nonumber \\
&\leq\int^T_0 \sum^{n-1}_{i = 1} \int^{x_i^{(n)}}_{x_{i-1}^{(n)}} \bigg| \frac{1}{\Delta x^{(n)}} \int^{x_i^{(n)}}_{x_{i-1}^{(n)}} |\rho_\delta(t, \xi)| d\xi \bigg|^2 dx dt \nonumber \\
&\leq \frac{1}{\Delta x^{(n)}} \int^T_0 \sum^{n-1}_{i = 1} \int^{x_i^{(n)}}_{x_{i-1}^{(n)}} \int^{x_i^{(n)}}_{x_{i-1}^{(n)}} |\rho_\delta(t, \xi)|^2 d\xi dx dt \nonumber \\
&= \int^T_0 \sum^{n-1}_{i = 1} \int^{x_i^{(n)}}_{x_{i-1}^{(n)}} |\rho_\delta(t, x)|^2 dx dt \nonumber \\
&\leq \int^T_0 |\rho_\delta(t, \cdot)|_H^2 dt \nonumber \\
&\label{AS_esti5}\leq \int^T_0 |p(t, \cdot)|_H^2 dt.
\end{align}
Thanks to $z_{\delta}^{(n)}(0) = h(v_{0}^{(n)})$, \eqref{AS_IC_esti}, and \eqref{AS_h_esti2}, we get
\begin{align*}
\int^1_0 \widehat{h} (z_{\delta}^{(n)}(0, x)) dx
&\leq C_{13} \int^1_0 |z_{\delta}^{(n)}(0, \cdot)|^2 dx + C_{14}\\
&\leq C_{13} \int^1_0 |h(v_0)|^2 dx + C_{14}.
\end{align*}
As a consequence, we arrive at \eqref{AS_esti1} and \eqref{AS_esti4} with the aid of \eqref{AS_h_esti1}.
\end{proof}

\begin{lemma} \label{AS_lem_esti2}
There exists a positive constant $C_{17}$ independent of $n$ and $\delta$ such that
\begin{align} \label{AS_esti7}
\left| \int^T_0 \int^1_0 (\partial_t z_{\delta}^{(n)}) \eta dx \right| \leq C_{17} |\eta|_{L^2(0,T; X)}
\qquad \text{for any $\eta \in L^2(0,T; X)$}.
\end{align}
\end{lemma}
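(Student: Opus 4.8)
The plan is to test the discretized equation \eqref{OP_eq} against a suitable piecewise-constant approximation of an arbitrary $\eta \in L^2(0,T;X)$ and to reproduce, at the discrete level, the estimate that one would obtain formally from \eqref{P_eq} by pairing $\partial_t h(v)$ with $\eta$ and integrating the flux term by parts. Concretely, given $\eta \in L^2(0,T;X)$, I would set $\eta_i^{(n)}(t) \coloneqq (\Delta x^{(n)})^{-1}\int_{x_{i-1}^{(n)}}^{x_i^{(n)}} \eta(t,\xi)\,d\xi$ and $\eta^{(n)}(t,x) \coloneqq \sum_{i=1}^n \chi_i^{(n)}(x)\,\eta_i^{(n)}(t)$. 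Multiplying the $i$-th equation in \eqref{OP_eq} by $\Delta x^{(n)}\,\eta_i^{(n)}$, summing over $i=1,\dots,n$, and using a discrete summation by parts (Abel summation) on the right-hand side, the boundary terms drop because of the form of \eqref{OP_eq} for $i=1$ and $i=n$, leaving
\begin{align*}
\int_0^1 (\partial_t z_\delta^{(n)}(t))\,\eta^{(n)}(t,x)\,dx
= -\int_{\Delta x^{(n)}}^1 \left( \widetilde v_\delta^{(n)}(t,x) + b\!\left(\tfrac{v_\delta^{(n)}(t,x)+v_\delta^{(n)}(t,x-\Delta x^{(n)})}{2}\right)\rho_\delta^{(n)}(t,x-\Delta x^{(n)}) \right) \widetilde\eta^{(n)}(t,x)\,dx,
\end{align*}
where $\widetilde\eta^{(n)}$ is the divided difference of $\eta^{(n)}$ defined exactly as $\widetilde v_\delta^{(n)}$ in Lemma \ref{AS_lem_esti1}. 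Note also that $\int_0^1 (\partial_t z_\delta^{(n)})\,\eta\,dx = \int_0^1 (\partial_t z_\delta^{(n)})\,\eta^{(n)}\,dx$ since $z_\delta^{(n)}$, hence $\partial_t z_\delta^{(n)}$, is piecewise constant on the same mesh and $\eta^{(n)}$ is the $L^2$-projection of $\eta$ onto that space.

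Next I would estimate the right-hand side by Cauchy--Schwarz in $(t,x)$. Using (A2) to bound $|b|\le C_b$, the Cauchy--Schwarz inequality gives
\begin{align*}
\left| \int_0^T\!\!\int_0^1 (\partial_t z_\delta^{(n)})\,\eta\,dx\,dt \right|
\le \left( |\widetilde v_\delta^{(n)}|_{L^2(Q(T))} + C_b\,|\rho_\delta^{(n)}|_{L^2(Q(T))} \right) |\widetilde\eta^{(n)}|_{L^2(Q(T))}.
\end{align*}
By Lemma \ref{AS_lem_esti1}, $|\widetilde v_\delta^{(n)}|_{L^2(Q(T))} \le C_{15}^{1/2}$, and the computation \eqref{AS_esti5} in the proof of Lemma \ref{AS_lem_esti1} shows $|\rho_\delta^{(n)}|_{L^2(Q(T))} \le |p|_{L^2(Q(T))}$, both bounds being independent of $n$ and $\delta$. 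It then remains to control $|\widetilde\eta^{(n)}|_{L^2(Q(T))}$ by $|\eta|_{L^2(0,T;X)}$ uniformly in $n$. This is the one genuinely non-trivial point: one must show that the divided difference of the piecewise-constant cell-average projection of an $H^1$ function is bounded, in $L^2$, by the $H^1$ seminorm of the function. For each fixed $t$, writing $\eta_i^{(n)}-\eta_{i-1}^{(n)}$ as an average of $\int$ of $\partial_x\eta$ over a window of length $2\Delta x^{(n)}$ (via the identity $\eta_i^{(n)}-\eta_{i-1}^{(n)} = (\Delta x^{(n)})^{-1}\int_0^{\Delta x^{(n)}}(\eta(x_{i-1}^{(n)}+s)-\eta(x_{i-1}^{(n)}-s+\Delta x^{(n)}))\,ds$ after a change of variables, or more simply by writing the cell averages as values of the primitive of $\eta$ and using $\eta\in H^1$), a Cauchy--Schwarz estimate followed by summation over $i$ yields $|\widetilde\eta^{(n)}|_{L^2(0,1)} \le |\partial_x\eta|_{L^2(0,1)}$, and integrating in $t$ gives $|\widetilde\eta^{(n)}|_{L^2(Q(T))} \le |\eta|_{L^2(0,T;X)}$.

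Combining the three bounds, I obtain \eqref{AS_esti7} with $C_{17} \coloneqq C_{15}^{1/2} + C_b\,|p|_{L^2(Q(T))}$, which is independent of $n$ and $\delta$ as required. I expect the main obstacle to be precisely the discrete Poincaré/interpolation step $|\widetilde\eta^{(n)}|_{L^2(Q(T))} \le |\eta|_{L^2(0,T;X)}$; everything else is a discrete summation by parts together with a direct application of Lemma \ref{AS_lem_esti1} and estimate \eqref{AS_esti5}. One should be slightly careful that the first cell contributes nothing to $\widetilde\eta^{(n)}$ (by definition it vanishes on $[0,\Delta x^{(n)})$), matching the fact that the summation by parts produces no boundary term there, so no spurious terms appear.
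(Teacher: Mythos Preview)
Your proposal is correct and follows essentially the same route as the paper: test the discrete system against the test function, perform a discrete summation by parts so that the flux terms are paired with a difference quotient of $\eta$, and then bound the two resulting pieces by Lemma~\ref{AS_lem_esti1} and by \eqref{AS_esti5} together with (A2). The only cosmetic difference is that the paper keeps the genuine difference quotient $\frac{\eta(t,x)-\eta(t,x+\Delta x^{(n)})}{\Delta x^{(n)}}$ inside the cell integrals rather than passing to the projection $\eta^{(n)}$; since $a_{\delta,i}^{(n)}$ is constant on each cell the two expressions coincide, and in the paper's formulation the step you flag as ``the one genuinely non-trivial point'' is just the classical bound $\bigl\|\tfrac{\eta(\cdot+h)-\eta}{h}\bigr\|_{L^2(0,1-h)}\le \|\partial_x\eta\|_{L^2(0,1)}$ for $\eta\in H^1(0,1)$, so it is in fact routine.
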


\begin{proof}
Let $n \in \mathbb{N}$ and let $\delta > 0$.
We put
\begin{align*}
a_{\delta, i}^{(n)} \coloneqq \frac{v_{\delta, i+1}^{(n)} - v_{\delta, i}^{(n)}}{\Delta x^{(n)}} + b\left(\frac{v_{\delta, i}^{(n)} + v_{\delta, i+1}^{(n)}}{2}\right) \rho^{(n)}_{\delta, i} 
\qquad \text{for $i = 1, 2, \ldots, n-1$}.
\end{align*}
Multiplying \eqref{OP_eq} by $\eta \in L^2(0,T; H)$ and integrating both sides over $Q(T)$, we obtain
\begin{align*}
\left| \int^T_0 \int^1_0 (\partial_t z_{\delta}^{(n)}) \eta dx \right|
= \left| \int^T_0 \left( \frac{1}{\Delta x^{(n)}} \sum^{n-1}_{i = 1} \int^{x_i^{(n)}}_{x_{i-1}^{(n)}} a_{\delta, i}^{(n)} \eta dx - \frac{1}{\Delta x^{(n)}} \sum^{n}_{i = 2} \int^{x_{i}^{(n)}}_{x_{i-1}^{(n)}} a_{\delta, i-1}^{(n)} \eta dx \right) dt \right|.
\end{align*}
We note that
\begin{align*}
\frac{1}{\Delta x^{(n)}} \sum^{n}_{i = 2} \int^{x_{i}^{(n)}}_{x_{i-1}^{(n)}} a_{\delta, i-1}^{(n)}(t) \eta(t, x) dx
&= \frac{1}{\Delta x^{(n)}} \sum^{n-1}_{i = 1} a_{\delta, i}^{(n)}(t) \int^{x_{i+1}^{(n)}}_{x_i^{(n)}} \eta(t, x) dx\\
&= \frac{1}{\Delta x^{(n)}} \sum^{n-1}_{i = 1} a_{\delta, i}^{(n)}(t) \int^{x_i^{(n)}}_{x_{i-1}^{(n)}} \eta(t, x + \Delta x^{(n)}) dx
\end{align*}
for any $t \in [0, T]$.
Thus, we have
\begin{align}
&\int^T_0 \left( \frac{1}{\Delta x^{(n)}} \sum^{n-1}_{i = 1} \int^{x_i^{(n)}}_{x_{i-1}^{(n)}} a_{\delta, i}^{(n)} \eta dx - \frac{1}{\Delta x^{(n)}} \sum^{n}_{i = 2} \int^{x_{i}^{(n)}}_{x_{i-1}^{(n)}} a_{\delta, i-1}^{(n)} \eta dx \right) dt \nonumber\\
&= \int^T_0 \sum^{n-1}_{i = 1} \int^{x_i^{(n)}}_{x_{i-1}^{(n)}} a_{\delta, i}^{(n)}(t) \frac{\eta(t, x) - \eta(t, x + \Delta x^{(n)})}{\Delta x^{(n)}} dx dt \nonumber \\
&= \int^T_0 \sum^{n-1}_{i = 1} \int^{x_i^{(n)}}_{x_{i-1}^{(n)}} \frac{v_{\delta, i+1}^{(n)}(t) - v_{\delta, i}^{(n)}(t)}{\Delta x^{(n)}}  \frac{\eta(t, x) - \eta(t, x + \Delta x^{(n)})}{\Delta x^{(n)}} dx dt \nonumber \\
\label{AS_esti6}
&\hspace{10mm} + \int^T_0 \sum^{n-1}_{i = 1} \int^{x_i^{(n)}}_{x_{i-1}^{(n)}} b\left(\frac{v_{\delta, i}^{(n)}(t) + v_{\delta, i+1}^{(n)}(t)}{2}\right) \rho^{(n)}_{\delta, i}(t) \frac{\eta(t, x) - \eta(t, x + \Delta x^{(n)})}{\Delta x^{(n)}} dx dt\\
&\eqqcolon I_1 + I_2. \nonumber
\end{align}
Applying H\"older's inequality and Lemma \ref{AS_lem_esti1} gives
\begin{align*}
|I_1| 
&\leq \left\{ \int^T_0 \int^{1-\Delta x^{(n)}}_0 \left| \frac{v_{\delta}^{(n)}(t, x) - v_{\delta}^{(n)}(t, x - \Delta x^{(n)})}{\Delta x^{(n)}} \right|^2 dx dt \right\}^{\frac{1}{2}} 
\left\{ \int^T_0 \left| \partial_x \eta (t, \cdot)\right|^2_H dt \right\}^{\frac{1}{2}}\\
&\leq C_{15}  |\eta|_{L^2(0,T; X)}.
\end{align*} 
It follows from (A2) and \eqref{AS_esti5} that
\begin{align*}
|I_2| &\leq C_{b} \left\{ \int^T_0 \int^{1-\Delta x^{(n)}}_0 | \rho_{\delta}^{(n)} (t, x - \Delta x^{(n)}) |^2 dx dt \right\}^{\frac{1}{2}}
\left\{ \int^T_0 \int^1_{\Delta x^{(n)}} \left| \partial_x \eta (t, x)\right|^2 dx dt \right\}^{\frac{1}{2}}\\
&\leq C_{b} |p|_{L^2(0,T; H)}  |\eta|_{L^2(0,T; X)}.
\end{align*}
Hence, by setting $C_{17} \coloneqq C_{15} + |p|_{L^2(0,T; H)}$, we arrive at the desired conclusion.
\end{proof}

We introduce a discrete version of the Aubin--Lions compactness theorem established by Gallou\"{e}t--Latch\'{e} \cite{G-L}.

\begin{lemma}[{\cite[Theorem 3.4]{G-L}}] \label{discrete_Aubin}
For $n \in \mathbb{N}$, define $u^{(n)} \colon Q(T) \to \mathbb{R}$ by
\begin{align*}
u^{(n)}(t, x) \coloneqq \sum^{m_n}_{i = 1} u_i^{(n)}(t) \chi_i^{(m_n)} (x)
\qquad \text{on $Q(T)$},
\end{align*}
where $m_n \in \mathbb{N}$ with $m_n \to \infty$ as $n \to \infty$, $u_i^{(n)} \colon [0, T] \to \mathbb{R}$ is a measurable function for each $i = 1, 2, \ldots, m_n$, $\chi_i^{(m_n)}$ is the charactristic function of $[ (i - 1)\Delta x^{(m_n)}, i\Delta x^{(m_n)})$ for $i = 1, 2, \ldots, m_n$, and $\Delta x^{(m_n)} = 1/m_n$.
In addition, for $t \in [0, T]$, put
\begin{align*}
\widetilde{u}^{(n)} (t, x) \coloneqq 
\begin{dcases}
0, &\qquad 0 \leq x < \Delta x^{(m_n)},\\
\frac{u^{(n)}(t, x) - u^{(n)}(t, x - \Delta x^{(m_n)})}{\Delta x^{(m_n)}}, &\qquad \Delta x^{(m_n)} \leq x \leq 1.
\end{dcases}
\end{align*}
If $\{ u^{(n)} \}$, $\{ \widetilde{u}^{(n)} \}$, and $\{ \partial_t u^{(n)} \}$ are bounded in $L^\infty(0,T; H)$, $L^2(0,T; H)$, and  $L^2(0,T; X^\ast)$, respetctively,
then there exist a subsequence $\{ u^{(n_j)} \}$ of $\{ u^{(n)} \}$ and $u \in L^2(0,T; H)$ such that $u^{(n_j)} \to u$ \ in \ $L^2(0,T; H)$ \ as \ $j \to \infty$, where $X^\ast$ denotes the dual space of $X$.
\end{lemma}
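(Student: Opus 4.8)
The plan is to prove Lemma~\ref{discrete_Aubin} as a discrete counterpart of the Aubin--Lions--Simon compactness argument, the conclusion being extracted from the Kolmogorov--Riesz--Fréchet criterion for relative compactness in $L^2(Q(T)) = L^2(0,T;H)$. Since the spatial mesh is refined along the sequence ($m_n \to \infty$), every translation estimate must be made \emph{uniform in $n$}; this uniformity, rather than the estimate for a single fixed $u^{(n)}$, is the real content.

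\emph{Step 1 (equicontinuity of spatial translates).} For the piecewise constant profile $u^{(n)}(t,\cdot)=\sum_i u_i^{(n)}(t)\chi_i^{(m_n)}$, a telescoping sum over the cells together with the Cauchy--Schwarz inequality yields, for $0<\xi$ small, an estimate of the form $\int_0^T\!\int_0^{1-\xi}|u^{(n)}(t,x+\xi)-u^{(n)}(t,x)|^2\,dx\,dt \le C\,(\xi+\Delta x^{(m_n)})\,\|\widetilde u^{(n)}\|_{L^2(0,T;H)}^2$. The right-hand side is bounded by the hypothesis on $\{\widetilde u^{(n)}\}$, and since $\Delta x^{(m_n)}\to 0$ it tends to $0$ as $\xi\to0$ uniformly in $n$ (the finitely many small indices $n$ being handled by continuity of translation on each fixed $L^2$-function).

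\emph{Step 2 (time translates, first in $X^\ast$).} Writing $u^{(n)}(t+\tau)-u^{(n)}(t)=\int_t^{t+\tau}\partial_t u^{(n)}(s)\,ds$ in $X^\ast$ and applying Cauchy--Schwarz in $s$ gives $\int_0^{T-\tau}\|u^{(n)}(t+\tau)-u^{(n)}(t)\|_{X^\ast}^2\,dt \le \tau^2\,\|\partial_t u^{(n)}\|_{L^2(0,T;X^\ast)}^2$, which is $O(\tau^2)$ uniformly in $n$ by hypothesis. \emph{Step 3 (upgrading to $H$).} To promote this to an equicontinuity statement in $L^2(0,T;H)$ I would introduce a spatial smoothing operator $S_\eta$ on $H$ and split $\|u^{(n)}(t+\tau)-u^{(n)}(t)\|_H$ into $\|(I-S_\eta)(u^{(n)}(t+\tau)-u^{(n)}(t))\|_H$ and $\|S_\eta(u^{(n)}(t+\tau)-u^{(n)}(t))\|_H$. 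For the first piece one needs a mesh-adapted bound $\|(I-S_\eta)u^{(n)}(s)\|_H\le C(\eta+\Delta x^{(m_n)})\,\|\widetilde u^{(n)}(s)\|_H$, so that after integration in $s$ it is small uniformly in $n$ once $\eta$ and $\Delta x^{(m_n)}$ are small; for the second piece one uses that $\|S_\eta\|_{\mathcal{L}(X^\ast,H)}$ grows only polynomially in $1/\eta$, together with Step~2. Choosing first $\eta$, then $n$ large (so that $\Delta x^{(m_n)}$ is small), then $\tau$ small, gives the required uniform equicontinuity in time.

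\emph{Step 4 (conclusion).} The family $\{u^{(n)}\}$ is bounded in $L^\infty(0,T;H)\subset L^2(Q(T))$, and by Steps~1 and~3 its translates in both the space and time variables tend to $0$ uniformly in $n$, the contributions near the endpoints $x=0,1$ being controlled in the usual way; hence the Kolmogorov--Riesz--Fréchet theorem provides a subsequence $\{u^{(n_j)}\}$ and a limit $u\in L^2(0,T;H)$ with $u^{(n_j)}\to u$ in $L^2(0,T;H)$. I expect the main obstacle to be Step~3: one must bridge the gap between the merely negative-norm ($X^\ast$) time regularity and the strong $H$-topology in which compactness is sought while possessing only a \emph{discrete} spatial gradient, so that the classical Lions interpolation $\|v\|_H\le \epsilon\|v\|_X+C_\epsilon\|v\|_{X^\ast}$ is unavailable (since $u^{(n)}\notin X=H^1(0,1)$); a mesh-dependent surrogate with constants that remain bounded under refinement has to be built, and checking that $S_\eta$ interacts with the piecewise-constant structure uniformly in the mesh size is the technical heart of the argument.
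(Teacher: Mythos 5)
Your proposal cannot coincide with ``the paper's proof'' because the paper gives none: Lemma \ref{discrete_Aubin} is quoted from Gallou\"et--Latch\'e \cite[Theorem 3.4]{G-L}, and the authors only remark that the space-discretized version can be proved ``in the same manner'' as the space--time discretized result there, omitting the argument. Your sketch is a legitimate self-contained alternative, and it is essentially the classical finite-volume compactness route: boundedness plus uniform-in-$n$ space and time translate estimates, concluded by the Kolmogorov--Riesz--Fr\'echet criterion in $L^2(Q(T))$. The cited theorem in \cite{G-L} instead works in an abstract discrete Aubin--Simon framework, where the role of your Step~3 is played by a ``discrete Lions lemma'' for a sequence of compactly--continuously embedded spaces, proved by a compactness/contradiction argument rather than by explicit mollification; what your construction buys is an explicit, elementary bridge in this 1D uniform-mesh setting (the bound $|(I-S_\eta)u^{(n)}(s)|_H \leq C(\eta+\Delta x^{(m_n)})\,|\widetilde u^{(n)}(s)|_H$ together with $\|S_\eta\|_{\mathcal{L}(X^\ast,H)}\lesssim \eta^{-3/2}$), while the abstract lemma of \cite{G-L} covers general meshes and norms without constructing $S_\eta$. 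You correctly identify the mesh-uniform surrogate for Lions' interpolation as the crux. Two small refinements would tighten the sketch: the telescoping/Cauchy--Schwarz computation actually yields the sharper bound $\xi(\xi+\Delta x^{(m_n)})\,\|\widetilde u^{(n)}\|_{L^2(0,T;H)}^2$ for the spatial translates, which gives uniformity in $n$ directly and makes the ``finitely many small $n$'' detour unnecessary; and the definition of $S_\eta$ on the bounded interval $(0,1)$ (extension or one-sided/reflected mollification near $x=0,1$) should be fixed explicitly so that both the $(I-S_\eta)$ estimate and the $X^\ast\to H$ bound survive the boundary, since the discrete gradient only controls interior translates.
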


\begin{remark}
The authors in \cite{G-L} established the Aubin--Lions compactness theorem for functions discretized in both space and time.
On the other hand, we consider only space-discretized functions in Lemma \ref{discrete_Aubin}.
Since Lemma \ref{discrete_Aubin} can be proved in the same manner as in \cite{G-L}, we omit its proof.
\end{remark}

\begin{proof}[Proof of Theorem \ref{AS_conv}]
For each $\delta > 0$, it follows from Lemmas \ref{AS_lem_esti1} and \ref{AS_lem_esti2} that
\begin{itemize}
\item
$\{ z_{\delta}^{(n)} \}$ and $\{ v_{\delta}^{(n)} \}$ are bounded in $L^\infty(0,T; H)$;
\item
$\{ \widetilde{z}_{\delta}^{(n)} \}$ and $\{ \widetilde{v}_{\delta}^{(n)} \}$ are bounded in $L^2(0,T; H)$;
\item
$\{ \partial_t z_{\delta}^{(n)} \}$ is bounded in $L^2(0, T; X^\ast)$.
\end{itemize}
Therefore, Lemma \ref{discrete_Aubin} implies that there exist a subsequence $\{ n_j \}$ of $\{ n \}$, $v_\delta \in L^\infty(0, T; H) \cap L^2(0, T; X)$, and $z_\delta \in L^\infty(0, T; H) \cap L^2(0, T; X) \cap W^{1,2}(0, T; X^\ast)$ such that 
\begin{alignat*}{2}
v_{\delta}^{(n_j)} &\to v_\delta, \quad z_{\delta}^{(n_j)} \to z_\delta
&&\qquad \text{weakly$\ast$ in $L^\infty(0, T; H)$},\\
\widetilde{v}_{\delta}^{(n_j)} &\to \partial_x v_\delta 
&&\qquad \text{weakly in $L^2(0, T, H)$},\\
z_{\delta}^{(n_j)} &\to z_\delta 
&&\qquad \text{strongly in $L^2(0, T; H)$},\\
\partial_t z_{\delta}^{(n_j)} &\to \partial_t z_\delta
&&\qquad \text{weakly in $L^2(0, T; X^\ast)$}
\end{alignat*}
as $j \to \infty$.
Since $h^{-1}$ is Lipschitz continuous, we have
\begin{alignat*}{2}
&v_{\delta}^{(n_j)} \to v_\delta
&&\qquad \text{strongly in $L^2(0,T; H)$ as $j \to \infty$},
\end{alignat*}
and hence $z_\delta = h(v_\delta)$. 

We show that for each $\delta > 0$, $v_\delta$ is a weak solution of (P)($\rho_\delta, v_0$).
Let $\eta \in W^{1,2}(0, T; H) \cap L^2(0, T; X)$ satisfy $\eta(T) = 0$.
Similarly to the proof of \eqref{AS_esti6}, for each $j$, we have
\begin{align*}
&\int^T_0 \int^1_0 \partial_t z_{\delta}^{(n_j)} \eta dx dt \\
&= - \int^T_0 \sum^{n_j-1}_{i=1} \int^{x_i^{(n_j)}}_{x_{i-1}^{(n_j)}} \bigg( \frac{v_{\delta, i+1}^{(n_j)} - v_{\delta, i}^{(n_j)}}{\Delta x^{(n_j)}} 
+ b\left(\frac{v_{\delta, i}^{(n_j)} + v_{\delta, i+1}^{(n_j)}}{2}\right) \rho^{(n_j)}_{\delta, i} \bigg) \widetilde{\eta}^{(n_j)} dx dt,
\end{align*}
where $\Delta x^{(n_j)} = 1/n_j$ and 
\begin{align*}
\widetilde{\eta}^{(n_j)}(t, x) \coloneqq \frac{\eta(t, x + \Delta x^{(n_j)}) - \eta(t, x)}{\Delta x^{(n_j)}}
\qquad \text{for $(t, x) \in Q(T)$}.
\end{align*}
It follows from integration by parts that
\begin{align*}
\int^T_0 \int^1_0 (\partial_t z_{\delta}^{(n_j)}) \eta dx dt 
&= -\int^T_0 \int^1_0 z_{\delta}^{(n_j)} \partial_t \eta dt dx
- \int^1_0 z_{\delta}^{(n_j)}(0) \eta(0) dx\\
&\to -\int^T_0 \int^1_0 z_\delta \partial_t \eta dx dt
- \int^1_0 z_\delta(0) \eta(0) dx 
\end{align*}
as $j \to \infty$.
We note that $z_\delta(0) = h(v_\delta(0)) = h(v_0)$.
In addition, we obtain
\begin{align*}
\int^T_0 \sum^{n_j-1}_{i=1} \int^{x_i^{(n_j)}}_{x_{i-1}^{(n_j)}} \frac{v_{\delta, i+1}^{(n_j)} - v_{\delta, i}^{(n_j)}}{\Delta x^{(n_j)}} \widetilde{\eta}^{(n_j)} dx dt
&= \int^T_0 \int^1_0 \widetilde{v}_\delta^{(n_j)} \widetilde{\eta}^{(n_j)} dxdt\\
&\to \int^T_0 \int^1_0 \partial_x v_\delta \partial_x \eta dx dt
\end{align*}
as $j \to \infty$.
For each $j$, we get
\begin{align*}
&\int^T_0 \sum^{n_j-1}_{i=1} \int^{x_i^{(n_j)}}_{x_{i-1}^{(n_j)}}  b\left(\frac{v_{\delta, i}^{(n_j)} + v_{\delta, i+1}^{(n_j)}}{2}\right) \rho^{(n_j)}_{\delta, i} \widetilde{\eta}^{(n_j)} dx dt \\
&= \int^T_0 \int_0^{1-\Delta x^{(n_j)}} b\left(\frac{v_{\delta}^{(n_j)} + v_{\delta}^{(n_j)} (\cdot, \cdot + \Delta x^{(n_j)})}{2} \right) \rho_{\delta}^{(n_j)} \widetilde{\eta}^{(n_j)} dx dt\\
&= \int^T_0 \int^{1-\Delta x^{(n_j)}}_0 b\left(\frac{v_{\delta}^{(n_j)} + v_{\delta}^{(n_j)}(\cdot, \cdot + \Delta x^{(n_j)})}{2} \right) \rho_{\delta}^{(n_j)} (\widetilde{\eta}^{(n_j)} - \partial_x \eta) dx dt\\
&\hspace{10mm} + \int^T_0 \int^{1-\Delta x^{(n_j)}}_0 \left( b\bigg(\frac{v_{\delta}^{(n_j)} + v_{\delta}^{(n_j)}(\cdot, \cdot + \Delta x^{(n_j)})}{2} \bigg) - b(v_\delta)\right) \rho_{\delta}^{(n_j)} \partial_x \eta dxdt\\
&\hspace{10mm} + \int^T_0 \int^{1-\Delta x^{(n_j)}}_0 b(v_\delta) (\rho_{\delta}^{(n_j)} - \rho_\delta) \partial_x \eta dxdt
-\int^T_0 \int^1_{1 - \Delta x^{(n_j)}}  b(v_\delta) \rho_\delta \partial_x \eta dx dt\\
&\hspace{10mm} +\int^T_0 \int^1_0 b(v_\delta) \rho_\delta \partial_x \eta dx dt\\
&\eqqcolon I_{1, j} + I_{2, j} + I_{3, j} + I_{4, j} + \int^T_0 \int^1_0 b(v_\delta) \rho_\delta \partial_x \eta dx dt.
\end{align*}
Since $\rho_{\delta}^{(n_j)}$ is uniformly bounded in $L^\infty(Q(T))$ with respect to $j$, $b$ is Lipschitz continuous, and $\rho_{\delta}^{(n_j)} \to \rho_\delta$ and $\widetilde{\eta}^{(n_j)} \to \partial_x \eta$ in $L^2(0,T; H)$ as $j \to \infty$, we have
\begin{align*}
I_{1, j} &\leq C_{b} \left\{ \int^T_0 |\rho_{\delta}^{(n_j)}|^2_H dt \right\}^\frac{1}{2} 
\left\{ \int^T_0 |\partial_x \eta - \widetilde{\eta}^{(n_j)} |_H dt \right\}^\frac{1}{2} \to 0,\\
I_{2, j} &\leq C_{b} |\rho_{\delta}^{(n_j)}|_{L^\infty(Q(T))} \int^T_0 \int^{1-\Delta x^{(n_j)}}_0 |\partial_x \eta| \left|\frac{v_{\delta}^{(n_j)}(\cdot, \cdot + \Delta x^{(n_j)}) + v_{\delta}^{(n_j)}}{2} - v_\delta \right| dxdt\\
&\leq \frac{C_{b} |\rho_{\delta}^{(n_j)}|_{L^\infty(Q(T))}}{2} \int^T_0 \int^{1-\Delta x^{(n_j)}}_0 |\partial_x \eta| (|v_{\delta}^{(n_j)} - v_{\delta}| + |v_{\delta}^{(n_j)}(\cdot, \cdot + \Delta x^{(n_j)}) - v_{\delta}|) dxdt\\
&\leq \frac{C_{b} |\rho_{\delta}^{(n_j)}|_{L^\infty(Q(T))}}{2} \int^T_0 \int^{1-\Delta x^{(n_j)}}_0 |\partial_x \eta| (|v_{\delta}^{(n_j)} - v_{\delta}| \\
&\hspace{10mm} + |v_{\delta}^{(n_j)}(\cdot, \cdot + \Delta x^{(n_j)}) - v_{\delta}(\cdot, \cdot + \Delta x^{(n_j)})|
+ |v_{\delta}(\cdot, \cdot + \Delta x^{(n_j)}) - v_{\delta}|) dxdt
\to 0, \\
I_{3, j}
&\leq C_{b} \int^T_0 \int^1_0 |\partial_x \eta| | \rho_{\delta}^{(n_j)} - \rho_\delta| dx dt
\to 0
\end{align*}
as $ j \to 0$.
Moreover, a simple computation gives
\begin{align*}
I_{4, j} \to 0 
\qquad \text{as $ j \to 0$}.
\end{align*}
Thus, we conclude that $v_\delta$ is a weak solution of (P)($\rho_\delta, v_0$).
For $\delta > 0$, since $\rho_\delta \in L^4(0,T; H)$, Theorem \ref{main_thm} implies that $v_\delta$ is a unique weak solution of (P)($\rho_\delta, v_0$).
Hence, the above convergences hold not only for certain subsequences but also for the whole sequences, that is, 
\begin{alignat*}{2}
&v_{\delta}^{(n)} \to v_\delta
&&\qquad \text{strongly in $L^2(0,T; H)$ and weakly$\ast$ in $L^\infty(0,T; H)$},\\
&\widetilde{v}_{\delta}^{(n)} \to \partial_x v_\delta
&&\qquad \text{weakly in $L^2(0,T; H)$},\\
&z_{\delta}^{(n)} \to z_\delta
&&\qquad\text{weakly in $W^{1,2}(0,T; X^\ast)$ and weakly$\ast$ in $L^\infty(0,T; X)$},\\
&\widetilde{z}_{\delta}^{(n)} \to \partial_x z_\delta
&&\qquad\text{weakly in $L^2(0,T; H)$}
\end{alignat*}
as $n \to \infty$.

Finally, we show the convergence of $\{ v_\delta \}$ to a weak solution of (P)($p, v_0$).
We remark that $C_{15}$ and $C_{17}$ given in Lemmas \ref{AS_lem_esti1} and \ref{AS_lem_esti2}, respectively, are independent of $\delta$ and $n$.
Thus, by letting $n \to \infty$ in \eqref{AS_esti1}, \eqref{AS_esti4}, and \eqref{AS_esti7}, we see that $\{ v_\delta \}$ is bounded in $L^\infty(0,T; H)$ and $L^2(0,T, X)$ and that $\{ z_\delta \}$ is bounded in $L^\infty(0,T; H), L^2(0,T, X)$ and $W^{1,2}(0,T; X^\ast)$.
Due to the standard Aubin--Lions compact theorem, there exist a subsequence $\{ \delta_j \}$ of $\{ \delta \}$, $v \in L^\infty(0, T; H) \cap L^2(0, T; X)$, and $z \in L^\infty(0, T; H) \cap L^2(0, T; X) \cap W^{1,2}(0, T; X^\ast)$ such that
\begin{alignat*}{2}
v_{\delta_j} &\to v
&&\qquad \text{weakly in $L^2(0, T; X)$ and weakly$\ast$ in $L^\infty(0, T; H)$}, \\
z_{\delta_j} &\to z
&&\qquad \text{weakly in $W^{1,2}(0, T; X^\ast)$ and strongly in $L^2(0, T; H)$}
\end{alignat*}
as $j \to \infty$.
From the Lipschitz continuity of $h^{-1}$, it follows that
\begin{alignat*}{2}
&v_{\delta_j} \to v
&&\qquad \text{strongly in $L^2(0,T; H)$ as $j \to \infty$},
\end{alignat*}
and hence $z = h(v)$.
By taking a suitable subsequence, we have $v_{\delta_j} \to v$ a.e. on $Q(T)$ as $j \to \infty$.
Let $\eta \in W^{1,2}(0,T; H) \cap L^2(0,T, X)$ satisfy $\eta(T) = 0$.
Then, for each $j$, we obtain
\begin{align*}
&- \int_{Q(T)} h(v_{\delta_j}) \partial_t \eta dxdt
+ \int_{Q(T)} (\partial_x v_{\delta_j} + b(v_{\delta_j}) \rho_{\delta_j}) \partial_x \eta dxdt
= \int^1_0 h(v_0) \eta(0) dx.
\end{align*}
The convergences of $\{ v_{\delta_j} \}$ and $\{ z_{\delta_j} \}$ give
\begin{align*}
\int_{Q(T)} h(v_{\delta_j}) \partial_t \eta dxdt 
&\to \int_{Q(T)} h(v) \partial_t \eta dxdt, \\
\int_{Q(T)} \partial_x v_{\delta_j} \partial_x \eta dx dt
&\to \int_{Q(T)} \partial_x v \partial_x \eta dx dt
\end{align*}
as $j \to \infty$.
Since $\rho_{\delta_j} \to p$ in $L^2(0,T; H)$ as $j \to \infty$, the Lebesgue dominated convergence theorem yields
\begin{align*}
&\left| \int_{Q(T)} (b(v_{\delta_j}) \rho_{\delta_j} \partial_x \eta - b(v) p \partial_x \eta) dx dt \right|\\
&\leq C_{b} \int^T_0 |\partial_x \eta|_H |\rho_{\delta_j} - p|_H dt
+ \int_{Q(T)}  |\partial_x \eta| |p| |b(v_{\delta_j}) - b(v)| dxdt\\
&\to 0
\end{align*}
as $j \to \infty$.
Therefore, we conclude that $v$ is a weak solution of (P)($p, v_0$).
In addition, if $p \in L^4(0, T; H)$, then the above convergence of $\{ v_{\delta_j} \}$ holds for the whole sequence $\{ v_\delta \}$ by virtue of Theorem \ref{main_thm}.
\end{proof}

\begin{center}
{\bf Acknowledgement }
\end{center}
 
The authors are grateful to members of Ebara Corporation for showing several interesting phenomena related to moisture penetration with fruitful discussion, which triggered this work.

\bibliographystyle{plain}

\end{document}